\documentclass[reqno, 12pt]{amsart}
\usepackage{amsmath,amscd,graphics,graphicx,color,a4wide,hyperref,verbatim}
\usepackage{mathptmx}
\synctex=1

% TikZ
\usepackage{tikz, tikz-3dplot, pgfplots}
\usepackage{tkz-graph}
\usepackage{tikz-cd}
\usetikzlibrary[positioning,patterns] % tikz libraries for relative positioning and silly patterns
\usetikzlibrary{matrix,arrows,decorations.pathmorphing}

\usepackage{csquotes}

\usepackage{textcmds}
\usepackage{epic}
%Usually
\usepackage{graphicx}
\usepackage{psfrag}
\usepackage{marvosym}
\usepackage{amsfonts}
\usepackage{amssymb}
\usepackage{mathrsfs}
\usepackage{enumitem}
\usepackage{subcaption}
%Young Diagram
\usepackage{ytableau}

\usepackage{amsthm,xcolor}
%\usepackage[colorlinks, linkcolor=red]{hyperref}
%\usepackage[noabbrev,nameinlink]{cleveref}

%%%Dirac operator%%%%
\usepackage{slashed}

%%Dynkin%%%

\usepackage{dynkin-diagrams}

\usepackage{float}

% Theorem style
\newtheorem{theorem}{Theorem}[section]
\newtheorem{lemma}[theorem]{Lemma}
\newtheorem{lem-def}[theorem]{Lemma-definition}
\newtheorem{proposition}[theorem]{Proposition}
\newtheorem{prop-def}[theorem]{Proposition-definition}

\newtheorem{conjecture}[theorem]{Conjecture}

\theoremstyle{definition}
\newtheorem{definition}[theorem]{Definition}

\newtheorem{example}[theorem]{Example}

\newtheorem{remark}[theorem]{Remark}

\newtheoremstyle{named}%
    {}{}{\itshape}{}{\bfseries}{.}{.5em}{\thmnote{#3}}
\theoremstyle{named}
\newtheorem*{namedtheorem}{Theorem}

\newcommand\bref[3][blue]{%
    \begingroup%
    \hypersetup{linkcolor=#1}%
    \hyperlink{#2}{#3}%
    \endgroup}

\numberwithin{equation}{section}

% for specifying a name
\newtheorem{thm}{Theorem}[section] % the main one
\theoremstyle{plain} % just in case the style had changed
\newcommand{\thistheoremname}{}
\newtheorem{genericthm}[thm]{\thistheoremname}

\newtheorem*{genericthm*}{\thistheoremname}
\newenvironment{namedthm*}[1]
  {\renewcommand{\thistheoremname}{#1}%
   \begin{genericthm*}}
  {\end{genericthm*}}

%%%%%%
% Symbols
% ==========Alphabet===================

\newcommand{\CC} {\mathbb{C}}

\newcommand{\LL} {\mathbb{L}}

\newcommand{\PP} {\mathbb{P}}

\newcommand{\RR} {\mathbb{R}}

\newcommand{\ZZ} {\mathbb{Z}}

\newcommand {\shA} {\mathcal{A}}
\newcommand {\shB} {\mathcal{B}}

\newcommand {\shD} {\mathcal{D}}

\newcommand {\shO} {\mathcal{O}}

\newcommand {\bR}{\mathbf{R}}

%==============================

\newcommand {\Cone} {\operatorname{Cone}}

\newcommand {\Ext} {\operatorname{Ext}}
\newcommand{\sExt}{\mathscr{E} \kern -3pt xt}

\newcommand {\Fl} {\operatorname{Fl}}

\newcommand {\Hom} {\operatorname{Hom}}
\newcommand {\sHom}{\mathscr{H}\kern-5pt\mathcalligra{om}}

\renewcommand {\Im} {\operatorname{Im}}

\newcommand {\Ker} {\operatorname{Ker}}

\newcommand {\Pic} {\operatorname{Pic}}

\newcommand {\rank} {\operatorname{rank}}

\newcommand {\Tot} {\operatorname{Tot}}
\newcommand{\IGr}{\operatorname{IGr}}

\newcommand {\IFl} {\operatorname{IFl}}

% ====Special======

%\newcommand {\Y} {{}^{\rm L} \kern -2pt X}

%\renewcommand {\S} {{}^{\rm L} T}

%\newcommand {\B} {{}^{\rm L} \kern -3pt \shA}
%\newcommand {\C} {{}^{\rm L} \kern -1pt \shC}

\usepackage{tablefootnote}

\title[]{On Derived Categories of Generalized Grassmannian Flips}
\author{Naichung Conan Leung}
\address{The Institute of Mathematical Sciences\\The Chinese University of Hong Kong\\Shatin, N.T.\\ Hong Kong}
\email{leung@math.cuhk.edu.hk}

\author{Ying Xie}
\address{Southern University of Science and Technology\\Shenzhen\\China}
\email{xiey@sustech.edu.cn}
\begin{document}

\begin{abstract}
In this paper, we construct and classify a new family of flips, called generalized Grassmannian flips, by generalizing the construction of standard flips for $\PP^m\times \PP^n$ to any generalized Grassmannian $G/P$, where $P$ is a maximal parabolic subgroup of a complex semi-simple algebraic group. In addition, we show that a 9-fold generalized Grassmannian flip for $Sp(6, \CC)$ satisfies the DK flip conjecture by Bondal-Orlov \cite{bondal2002derived} and Kawamata \cite{kawamata2002d} via mutation techniques by Kuznetsov and Thomas' chess game method. 
\end{abstract}

\maketitle

\section{Introduction}
Flip is a fundamental surgery operation for constructing minimal models in high-dimensional birational geometry. Minimal models are also expected to be minimal with respect to derived categories. Bondal-Orlov \cite{bondal2002derived} and Kawamata \cite{kawamata2002d} conjectured that for any flip between smooth projective varieties $Y_2 \dashrightarrow Y_1$, there is a derived embedding on their bounded derived categories of coherent sheaves $D^b Coh(Y_1) \hookrightarrow D^b Coh(Y_2)$. This conjecture (DK Flip Conjecture throughout this paper) has been verified for some toroidal flips\footnote{See \cite{kawamata20054}, \cite{kawamata2006}, \cite{kawamata2013} and \cite{Kawamata2016}.} and \emph{Grassmannian flips}\footnote{See \cite{aflip}.}.

In the first part of this paper, we investigate \emph{simple flips} (Definition \ref{defsimpleflip}) and classify simple flips of homogeneous type of rank 1, i.e., \emph{generalized Grassmannian flips} (Definition \ref{defgflip}),  whose exceptional sets are generalized Grassmannians. Recall that a homogeneous variety $G/P$ is a generalized Grassmannian if $P$ is a maximal parabolic subgroup of a complex semi-simple algebraic group $G$. When $G=SL(n, \CC)$, generalized Grassmannians are the ordinary Grassmannians. Note that high-rank (rank$\geq 2$)  simple flips of homogeneous type can be viewed as families of generalized Grassmannian flips (see Example \ref{familyC3}).

%give a systematic construction of \emph{generalized Grassmannian flips} for any complex semi-simple Lie group under a mild \bref{thm:asp}{Assumption (A)} based on the results of \cite{kanemitsu2018mukai} by Kanemitsu and \cite{aflip} by the authors.

%The first result of this paper is the following classification theorem (Table \ref{cgflip}):
 \begin{theorem}[(=Theorem \ref{main2})]
 There is a classification of generalized Grassmannian flips via marked Dynkin diagrams. (See Appendix for labelings of Dynkin diagrams.)
  \end{theorem}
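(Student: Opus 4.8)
The plan is to turn the classification into a finite combinatorial problem attached to marked Dynkin diagrams and then run through the Killing--Cartan list. By the structure of simple flips (Definition \ref{defsimpleflip}), a generalized Grassmannian flip (Definition \ref{defgflip}) is an equivariant diagram $Y_+ \leftarrow \widetilde{Y} \rightarrow Y_-$ in which, near the flipping locus $X_+$, the variety $Y_+$ is the total space $\Tot_{X_+}(\shN_+)$ of a homogeneous bundle, the contraction $Y_+\to Z$ crushes the zero section $X_+$ to a point, and $\widetilde{Y}$ is the common blow-up $\mathrm{Bl}_{X_+}Y_+ = \mathrm{Bl}_{X_-}Y_-$. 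The ``rank $1$'' hypothesis forces $\Pic(X_\pm)=\ZZ\cdot\shO_{X_\pm}(1)$, i.e.\ $X_\pm = G_\pm/P_\pm$ with $P_\pm$ maximal, so the flip is recorded by two marked Dynkin diagrams $(\Gamma_\pm,i_\pm)$ together with the bundles $\shN_\pm$ and the requirement that both contractions have the \emph{same} target $Z$. First I would show that this package is rigid: since $Y_+\to Z$ contracts the extremal ray spanned by a minimal rational curve in $X_+$ and $\Pic X_+$ has rank $1$, the target $Z$ --- hence $\shN_+$, hence the partner side $Y_-$ --- is determined by $X_+$ together with a finite amount of discrete data singled out by the shape of $\Gamma$ near $i$. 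In the prototype $\PP^m\times\PP^n$, $X_+=\PP^m\hookrightarrow\PP(V)$ with $V=\CC^{m+1}$, the discrete datum is a multiplicity space $U=\CC^{n+1}$, one has $\shN_+=\shO(-1)\otimes U^\ast$, and $Z$ is the affine cone over the Segre product $\PP^m\times\PP(U^\ast)$.

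Next I would classify the admissible normal bundles. A homogeneous bundle $\shN_+$ on $G/P$ can occur only when $\shN_+^\vee$ is globally generated and the associated morphism contracts $X_+$; decomposing $\shN_+$ into irreducible homogeneous summands and applying Borel--Weil--Bott, the allowed summands turn out to be the ``tautological'' homogeneous bundles attached to the nodes of $\Gamma$ adjacent to the marked node $i$ (and their duals), up to twist by $\shO(1)$ and tensoring with a multiplicity space --- in type $A$ these are the tautological sub/quotient bundles on the relevant Grassmannian, and for $X_+=\PP^m$ the only possibility is $\shO(-1)\otimes U^\ast$. Hence each $G/P$ admits only finitely many candidates, each recorded by one further decoration of the marked diagram $(\Gamma,i)$; this is the decoration that appears in the Appendix. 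The classification is thereby reduced to deciding, for every marked diagram and decoration, whether the contraction $Y_+\to Z$ is (i) small and (ii) the source of a \emph{second} small resolution $Z\leftarrow Y_-\supset X_-$, and, if so, whether $Y_+\dashrightarrow Y_-$ is (iii) a flip rather than a flop.

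Items (i) and (iii) are bookkeeping: smallness and the flip/flop dichotomy are read off from $K_{Y_\pm}|_{X_\pm}=K_{X_\pm}\otimes\det(\shN_\pm)^{-1}$ evaluated on a minimal rational curve, which compares the Fano index $\iota(X_\pm)$ with $\deg\det\shN_\pm$, both explicit functions of the marked diagram; one obtains a flip exactly when these comparisons have opposite signs on the two sides, and a flop when they agree. The genuine difficulty is (ii): producing $Y_-$ and identifying the partner marked diagram $(\Gamma_-,i_-)$. I would establish this ``duality'' by identifying $Z$ explicitly case by case --- as a determinantal variety, an isotropic or orthogonal analogue of one, or a nilpotent orbit closure attached to $G$ --- and then invoking the Kempf-type collapsings of that variety, which supply $Y_-$ and its flipping locus $X_-$ directly. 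I expect this to require a case analysis along the Killing--Cartan list: the classical types treated uniformly via (isotropic) Grassmannians and quadrics, and $G_2$, $F_4$, $E_6$--$E_8$ handled individually, several marked diagrams yielding no flip at all. Assembling the outcomes, organised by marked Dynkin diagram, produces the asserted classification; the hardest single point throughout is the explicit identification of $Z$ underlying the existence of the partner resolution $Y_-$.
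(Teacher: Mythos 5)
Your proposal heads in a reasonable direction but diverges from the paper's argument in a way that leaves the decisive steps unproved. The paper's classification does not start from the flipping loci and their normal bundles; it starts from the exceptional \emph{divisor} $E$, which by Definition \ref{defgflip} is itself a homogeneous variety $G/P_I$ of Picard rank $2$, hence is encoded by a Dynkin diagram with two marked nodes $\{\alpha,\beta\}$. The whole classification then reduces to a finite combinatorial check: for which $(\Gamma,\{\alpha,\beta\})$ are \emph{both} Tits shadows (the fibers of $G/P_{\alpha\cup\beta}\to G/P_\alpha$ and $G/P_{\alpha\cup\beta}\to G/P_\beta$) projective spaces (Condition (A)), with the flip/flop dichotomy read off from $\dim G/P_\alpha$ versus $\dim G/P_\beta$ (Lemma \ref{flip}); the flop half is imported wholesale from Kanemitsu's classification of simple $K$-equivalences, and Lemmas \ref{2pj} and \ref{kineq} then \emph{construct} both sides $X_i=\PP(E_i^\vee\oplus\shO)$ and the common blow-up directly from $E$, so the existence of the partner resolution is automatic rather than something to be discovered. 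Your proposal never isolates this condition on $E$; the criterion you substitute for it (global generation of $\shN_+^\vee$ plus contraction of $X_+$, filtered through Borel--Weil--Bott) is necessary but nowhere near sufficient for the second projective-bundle structure to exist, and the claim that the admissible normal bundles are exactly the tautological bundles attached to adjacent nodes is asserted without argument.

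The more serious gap is your item (ii). You correctly identify the existence of the partner small resolution $Y_-$ as the crux, but your plan --- identify $Z$ case by case as a determinantal variety or nilpotent orbit closure and invoke Kempf collapsings --- is only a programme, not a proof; none of the identifications are carried out, and it is exactly here that the table in Theorem \ref{main2} would have to be produced. Relatedly, your setup quietly replaces the paper's definition (exceptional divisor homogeneous of Picard rank $2$) with the weaker condition that the flipping loci are generalized Grassmannians; a projectivized homogeneous bundle over $G/P$ need not be $G$-homogeneous, so without an argument that your admissible $\shN_\pm$ force $E=\PP(\shN_+^\vee)$ to be homogeneous, you are classifying a different (a priori larger) class of flips. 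To repair the proposal along the paper's lines, the key missing idea is to pass from the centers to the roof $E$, impose that $E$ carries two projective space fibrations, and reduce the existence question to the Tits-shadow computation on two-node markings of each Dynkin diagram.
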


 \begin{table}[H]
\begin{center}
\resizebox{\textwidth}{!}{
\begin{tabular}{|c|c|}
\hline
Dynkin diagram   &generalized Grassmannian flips \\
\hline
$A_m \times A_n\, (m> n)$\tablefootnote{It is a flop when $n=m$.}  &$(\alpha_1, \beta_1)$         \\
$A_n\, (n\geq 2)$  &$(\alpha_{i+1}, \alpha_{i})$ $(1\leq i<\dfrac{n}{2})$; $(\alpha_i, \alpha_{i+1})$ $(\dfrac{n}{2}<i<n)$ \tablefootnote{It is a flop when $i=n/2$.} \\
$B_n\,(n \geq 3)$   &  $(\alpha_{n-1}, \alpha_{n})$ for all $n$ and $(\alpha_3, \alpha_1)$ for $B_3$  \\
$C_n\, (n\geq 3)$ &  $(\alpha_{i+1}, \alpha_i) (1\leq i<\dfrac{2n-1}{3})$; $(\alpha_{i}, \alpha_{i+1}) (\dfrac{2n-1}{3}<i<n)$ \tablefootnote{It is a flop when $i=(2n-1)/3$.}\\
$D_n\, (n\geq 4), E_6, E_7, E_8, F_4, G_2$ & None \tablefootnote{We only consider $n>3$ here since $D_2=A_1\times A_1, D_3=A_3$.}\\
\hline
\end{tabular}}\caption{Classification of generalized Grassmannian flips}\label{cgflip}
\end{center}
\end{table}
 
 \begin{remark}
 \begin{enumerate}
 \item The above classification is based on \cite{kanemitsu2022mukai} which explicitly classifies all simple \emph{flops} of homogeneous type.
 \item The generalized Grassmannian flips $(A_m \times A_n, \alpha_1, \beta_1)$ in the first row are the standard flips, and the generalized Grassmannian flips $(A_n, \alpha_2, \alpha_1)$ in the second row are the Grassmannian flips in \cite{aflip}.
    \end{enumerate}
 \end{remark}

In the second part of this paper, we verify the DK Flip Conjecture for the generalized Grassmannian flip $(C_3, \alpha_2, \alpha_3)$ in detail. %Let us describe the example here. 

We consider the isotropic Grassmannians $\IGr(i, 6)$, which consists of all $i$-th dimensional isotropic linear  subspaces of the symplectic vector space $(\CC^6, \omega)$.  

Let $h_i$ denote the ample generator of $\Pic(\IGr(i, 6))$ for $i=2, 3$. The tautological bundle over $\IGr(i, 6)$ throughout this paper, denoted as $U_i$, is a subbundle of the trivial bundle $\underline{\CC^6}=\CC^6\otimes \shO$. The isotropic condition 
gives a bundle morphism $\underline{\CC^6}/U_2 \xrightarrow{\lrcorner\omega}U_2^{\vee}$, which is defined as the contraction by the linear symplectic form $\omega$. Denote its kernel by $N_{\omega}$, and consider    
\begin{align*}
\quad X_2=\PP_{\IGr(2, 6)}(N_{\omega}(-2h_2)\oplus \shO), \quad X_3=\PP_{\IGr(3, 6)}(U_3^{\vee}(-2h_3)\oplus \shO).
\end{align*}

Then there is a generalized Grassmannian flip:
\begin{equation}\label{gflips}
\begin{tikzcd}
X_2 \arrow[r, dashed, "g"] & X_3.
\end{tikzcd}
\end{equation}

\begin{theorem}\label{thm}
For the flip $g$ (\ref{gflips}), there is an embedding of their derived categories of coherent sheaves:
\begin{center}
\begin{tikzcd}
D^b Coh(X_3) \arrow[r, hook] & D^b Coh(X_2).
\end{tikzcd}
\end{center}
\end{theorem}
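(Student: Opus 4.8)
The plan is to resolve the flip $g$ by a common smooth blow-up $\blX$ and to compare, through a sequence of mutations (a ``chess game''), the two semiorthogonal decompositions that the two blow-down maps induce on the bounded derived category of coherent sheaves of $\blX$; this is the strategy used for the Grassmannian flips in \cite{aflip}. Below $D^b(-)$ abbreviates $D^b\Coh(-)$. Let $Z_2\subset X_2$ and $Z_3\subset X_3$ be the sections of the two $\PP$-bundles contracted by the flipping contractions; under the presentations in the statement $Z_2\cong\IGr(2,6)$ has codimension $2$ in $X_2$ and $Z_3\cong\IGr(3,6)$ has codimension $3$ in $X_3$. From the structure of a simple flip (Definition~\ref{defsimpleflip}) the blow-up $\blX:=\mathrm{Bl}_{Z_2}X_2$ is smooth and coincides with $\mathrm{Bl}_{Z_3}X_3$; its exceptional divisor $E$ is identified, through the $\PP$-bundle data, with the isotropic two-step flag variety $\IFl(2,3;6)=\{U_2\subset U_3\}$, so that $E\to Z_2$ is the $\PP^1$-bundle $\PP_{\IGr(2,6)}(N_\omega)$ of isotropic $U_3$'s with $U_2\subset U_3\subset U_2^{\perp}$ and $E\to Z_3$ is the $\PP^2$-bundle $\PP_{\IGr(3,6)}(U_3^{\vee})$ of hyperplanes $U_2\subset U_3$. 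I would also record the normal bundles $N_{Z_2/X_2}$ and $N_{Z_3/X_3}$, which are twists of $N_\omega^{\vee}$, resp.\ $U_3^{\vee}$, by powers of $h_2$, $h_3$; these fix the twists in the blow-up formula.

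Orlov's blow-up formula for $\blX\to X_2$ (codimension $2$) and for $\blX\to X_3$ (codimension $3$), combined with the projective-bundle formula for $X_2\to\IGr(2,6)$ and $X_3\to\IGr(3,6)$, gives
\begin{align*}
D^b(\blX)&=\big\langle \Psi\big(D^b(\IGr(2,6))\big),\ p_2^{*}D^b(X_2)\big\rangle,\\
D^b(\blX)&=\big\langle \Psi_1'\big(D^b(\IGr(3,6))\big),\ \Psi_2'\big(D^b(\IGr(3,6))\big),\ p_3^{*}D^b(X_3)\big\rangle,
\end{align*}
where $p_i\colon\blX\to X_i$ are the blow-downs and $\Psi,\Psi_1',\Psi_2'$ are the fully faithful functors $D^b(Z_i)\to D^b(E)\to D^b(\blX)$ obtained by pulling back along $E\to Z_i$, twisting by a power of $\shO(E)$, and pushing forward along $E\hookrightarrow\blX$. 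Both decompositions refine to full exceptional collections of the same length, $4\,\chi(\IGr(2,6))=6\,\chi(\IGr(3,6))=48$ (here $\chi(\IGr(2,6))=12$ and $\chi(\IGr(3,6))=8$), which is the first consistency check. Since each $p_i^{*}$ is fully faithful, it suffices to show $p_3^{*}D^b(X_3)\subseteq p_2^{*}D^b(X_2)$, and by the standard relation between orthogonal complements of admissible subcategories this is equivalent to the inclusion of subcategories of $D^b(\blX)$
\[
\Psi\big(D^b(\IGr(2,6))\big)\ \subseteq\ \big\langle \Psi_1'\big(D^b(\IGr(3,6))\big),\ \Psi_2'\big(D^b(\IGr(3,6))\big)\big\rangle .
\]

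To prove this inclusion I would refine both sides by full, minimal (Lefschetz-type) exceptional collections on $\IGr(2,6)$ and $\IGr(3,6)$ --- available from Kuznetsov's work on isotropic Grassmannians of lines and Fonarev's (and others') work on Lagrangian Grassmannians --- together with the induced exceptional collection on $\IFl(2,3;6)$, and then run a prescribed sequence of elementary left and right mutations, organised on a grid (the chess game). Each elementary step is a mutation past a one-block admissible subcategory, and its effect is computed from the Koszul resolution of $\shO_E$ on $\blX$, the relative Euler sequences of the two $\PP$-bundle structures on $E$, and Borel--Weil--Bott for $Sp(6,\CC)$-equivariant bundles on $\IGr(2,6)$, $\IGr(3,6)$ and $\IFl(2,3;6)$. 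Carrying the sequence through to the end exhibits the displayed inclusion, and hence the embedding $D^b\Coh(X_3)\hookrightarrow D^b\Coh(X_2)$.

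Essentially all the content of the theorem sits in this last step. The delicate points are to choose the exceptional collections on $\IGr(2,6)$ and $\IGr(3,6)$ compatibly, so that successive mutations carry blocks to blocks, and to verify the precise family of cohomology vanishings that make the sequence of mutations close up exactly --- a long but in principle mechanical Borel--Weil--Bott bookkeeping, which the chess-game formalism is designed to organise; I expect this, rather than any conceptual issue, to be the main obstacle. A secondary technical point is the geometric input of the first step --- the identification $\mathrm{Bl}_{Z_2}X_2\cong\mathrm{Bl}_{Z_3}X_3$ with $E\cong\IFl(2,3;6)$, and the computation of $N_{Z_2/X_2}$ and $N_{Z_3/X_3}$ --- which must be done with care so that Orlov's formula is applied with the correct twists.
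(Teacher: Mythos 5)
Your overall architecture --- the common resolution $X=\mathrm{Bl}_{\IGr(2,6)}X_2\cong \mathrm{Bl}_{\IGr(3,6)}X_3$ with exceptional divisor $E=\IFl(2,3,6)$, Orlov's blow-up SODs, and a chess game of mutations resting on the two projective-bundle structures $E=\PP_{\IGr(2,6)}(N_{\omega})=\PP_{\IGr(3,6)}(U_3^{\vee})$, their relative Euler sequences, and Borel--Weil--Bott --- is exactly the paper's, and your geometric bookkeeping (codimensions $2$ and $3$, the normal bundle twists, the length count $48$) is correct.

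The gap is in your reduction. You claim it suffices to prove $\pi_3^*D^b(X_3)\subseteq \pi_2^*D^b(X_2)$ inside $D^b(X)$, equivalently that the Orlov component $\Psi\bigl(D^b(\IGr(2,6))\bigr)$ lies in $\bigl\langle \Psi_1'(D^b(\IGr(3,6))),\ \Psi_2'(D^b(\IGr(3,6)))\bigr\rangle$. That inclusion fails for this flip: if it held, the naive pull--push functor $\pi_{2*}\pi_3^*$ would already be fully faithful (because $\pi_{2*}\pi_2^*=\mathrm{id}$ and $\pi_2^*$ is fully faithful), whereas the paper emphasizes that precisely because $g$ is not toroidal this functor is \emph{not} fully faithful --- this failure is the reason the mutation step exists at all. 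What is actually proved is an inclusion between \emph{repositioned} copies: one first right-mutates $\pi_2^*D(X_2)$ through $\langle \shA'(-2h_2),\shA'(-h_2)\rangle$ and $\pi_3^*D(X_3)$ through $\langle \shB(-2h_3),\shB(-h_3)\rangle$, obtaining admissible subcategories $\shD_2\cong D(X_2)$ and $\shD_3\cong D(X_3)$ sitting in new positions in $D(X)$, whose orthogonals ${}^{\perp}\shD_2$ and ${}^{\perp}\shD_3$ are generated by objects carrying mixed $h_2$- and $h_3$-twists (such as $U_3(h_3)$, $U_3^{\vee}(h_2)$, $\shO(h_2+h_3)$) that occur in neither original Orlov complement; only for these does ${}^{\perp}\shD_2\subseteq{}^{\perp}\shD_3$ hold, and the resulting embedding is the composite $\pi_{2*}\circ(\text{mutation})\circ\phi\circ(\text{mutation})\circ\pi_3^*$ rather than $\pi_{2*}\pi_3^*$. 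So the ``prescribed sequence of mutations'' you defer to cannot terminate at the inclusion you state: you must also mutate the $D(X_i)$-components themselves, and the correct target is the inclusion of the orthogonals of the mutated copies.
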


Let us outline the proof of Theorem \ref{thm}. Firstly, $g$ can be realized as blowing-up $X_2$ along $\IGr(2, 6)$ and subsequently blowing down it onto $X_3$ with the center $\IGr(3, 6)$, i.e., there is an isomorphism
$$ Bl_{\IGr(2, 6)} X_2\cong Bl_{\IGr(3, 6)} X_3=:X.$$
 
Secondly, Orlov's formula on derived categories of blowing-ups gives two semi-orthogonal decompositions (SOD) of $D(X):=D^b Coh(X)$:
\begin{align}
&D(X)=\langle D(X_2), D(\IGr(2, 6))\rangle; \label{sod1}\\
&D(X)=\langle D(X_3), D(\IGr(3, 6)), D(\IGr(3, 6))\otimes\shO(h_2)\rangle. \label{sod2}
\end{align}

Thirdly, in order to construct an embedding of $D(X_3)\hookrightarrow D(X_2)$, it is natural to consider the functor $F: D(X_3) \hookrightarrow D(X) \rightarrow D(X_2)$ defined by the composition of the pullback and the pushforward of the blowing-up morphisms $X\rightarrow X_3$ and $X\rightarrow X_2$, respectively. If $g$ were a toroidal flip, the results of Kawamata (\cite{kawamata20054}, \cite{kawamata2006}, \cite{kawamata2013} and \cite{Kawamata2016}) would imply that $F$ does give the desired embedding. But unfortunately, since the flip $g$ is not toroidal, the functor $F$ is usually not fully-faithful. We can therefore apply the mutation techniques by Kuznetsov (\cite{kuznetsov2010derived}) to the SODs (\ref{sod1}) and (\ref{sod2}) to get new SODs $D(X)=\langle \shD_2, {}^{\perp}\shD_2\rangle$ (\ref{sod1'}) and $D(X)=\langle \shD_3, {}^{\perp}\shD_3\rangle$ (\ref{sod2'}), where  $\shD_i\cong D(X_i), i=2, 3$. In the new SODs, the subcategories ${}^{\perp}\shD_2$ and ${}^{\perp}\shD_3$ have more common objects than those in (\ref{sod1}) and (\ref{sod2}). 

Fourthly, by applying Thomas' chess game method (\cite{thomas2018notes})\footnote{See \cite{jiang2021categorical} and \cite{jiang2018derivedcat}, \cite{jiang2018blowing}, \cite{jiang2018categorical}, \cite{jiang2022derived} for other applications of this method.}, we show that the natural functor $\phi: \shD_3  \hookrightarrow D(X) \rightarrow \shD_2$ is fully-faithful, which yields a fully-faithful embedding $D(X_3)\hookrightarrow D(X_2)$(Theorem \ref{thm}).

The rest of this paper is organized as follows. In section 2, we introduce the notion of generalized Grassmannian flips and give a classification of all possible local models. Section 3 covers all necessary mutation calculations. Section 4 presents the detailed proof of Theorem \ref{thm}.

\subsection*{Conventions}
In this paper, $\PP(F)=Proj (Sym^{\bullet}F^{\vee})$ for any vector bundle $F$, and $D(S)=D^b Coh(S)$ is the bounded derived category of coherent sheaves on a smooth projective variety $S$.  We write $E_1\sim E_2$ if two divisors $E_1$ and $E_2$ are linearly equivalent. The derived functors $\RR Hom(-,-)$ and $\Ext^{\bullet}(-,-)$ are taken over the total space $X$. We will omit the natural functors $p_1^*, p_2^*$ and $j_*$ unless they are not obvious in the content. The ground field of all varieties is the complex number field $\CC$.

\section{Generalized Grassmannian Flips}
\subsection{Structures of Simple Flips}
 
\begin{definition}[(\cite{kollar1999rational}, \cite{kollar2008birational}, \cite{debarre2013higher})]
Let $Y_1$ and $Y_2$ be two smooth projective varieties. A birational map $f: Y_2\dashrightarrow Y_1$ is called a \emph{flip} if there is a normal variety $W$ with two small contractions\footnote{I.e., isomorphisms in codimension 1.} $\phi_i: Y_i\rightarrow W$ $(i=1, 2)$  such that
\begin{enumerate}
\item $f=\phi_1^{-1}\circ \phi_2$. 
\item $\phi_i$ is elementary, i.e., the relative Picard number $\rho(Y_i/W)=\rho(Y_i)-\rho(W)=1$ $(i=1, 2)$.
\item $-K_{Y_2}$ is $\phi_2$-ample and $K_{Y_1}$ is $\phi_1$-ample.  
\end{enumerate}
\end{definition}

For any birational map $f$, one can always find a smooth model $X$ by resolving the indeterminacy of $f$ via taking a resolution of singularities of the graph closure $\overline{\Gamma_{f}}\subset Y_2\times Y_1$.
That is, $f=\pi_1\circ\pi_2^{-1}$, where $\pi_i$'s are birational morphisms.
\begin{equation}
 \begin{tikzcd}
& \arrow[dl,swap, "\pi_2"]  X \arrow[dr, "\pi_1"]\\
   Y_2 \arrow[rr,dashed, "f"]     && Y_1
\end{tikzcd}
\end{equation} 

\begin{definition}\label{defsimpleflip}
A flip $f$ is called \emph{simple}, if both $\pi_1$ and $\pi_2$ are blowing-ups along certain smooth centers $Z_1$ and $Z_2$, respectively. 
\end{definition}

\begin{proposition}\label{exp}
Let $E_i$ be the exceptional divisor of $\pi_i$ $(i=1, 2)$. Then we have 
(i) $E_1=E_2$; (ii) $Y_2>_K Y_1$, i.e. $\pi_2^*K_{Y_2}-\pi_1^*K_{Y_1}$ is linearly equivalent to an effective divisor. 
\begin{equation}
 \begin{tikzcd}
%&E_2 \arrow[ddl,swap,"p_2"] \arrow[dr, hook]&&\arrow[dl,hook'] E_1\arrow[ddr,"p_1"] \\
E_2 \arrow[d,swap,"p_2"] \arrow[r, hook]   & X \arrow[d,swap, "\pi_2"] \arrow[r, equal]  &X  \arrow[d, "\pi_1"] &\arrow[l,hook']  E_1\arrow[d,"p_1"] \\
   Z_2\arrow[r,hook] & Y_2 \arrow[r,dashed, "f"]      &  Y_1& \arrow[l, hook',swap]  Z_1
        \end{tikzcd}
\end{equation}
\end{proposition}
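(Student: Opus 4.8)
The plan is to derive both assertions from the behaviour of canonical classes under blow-ups, combined with the two defining features of a flip — that $f$ is an isomorphism in codimension one, and the relative ampleness of $-K_{Y_2}$ and $K_{Y_1}$. Set $c_i=\codim_{Y_i}Z_i$. Since $Y_i$ is smooth and $\pi_i$ contracts a divisor, one has $c_i\ge 2$ (blowing up a Cartier divisor is an isomorphism), $\pi_i$ is an isomorphism over $Y_i\setminus Z_i$, $K_X=\pi_i^*K_{Y_i}+(c_i-1)E_i$, and $\mathcal{O}_X(E_i)$ restricts to $\mathcal{O}(-1)$ on the $\PP^{c_i-1}$-fibres of $p_i\colon E_i\to Z_i$; I will use these facts freely.

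For (i), I would show $E_2\subseteq E_1$ and then conclude by symmetry. Since $\phi_1,\phi_2$ are small, $f$ is an isomorphism in codimension one, so there are closed subsets $S_i\subseteq Y_i$ of codimension $\ge 2$, which may be taken to contain $Z_i$, with $f\colon Y_2\setminus S_2\xrightarrow{\ \sim\ }Y_1\setminus S_1$. If a component $D$ of $E_2$ were not contained in $E_1$, then $\pi_1$ would be a local isomorphism at the generic point $\eta$ of $D$, so $\overline{\pi_1(D)}$ would be a prime divisor of $Y_1$, hence not contained in $S_1$; its generic point $\eta'$ therefore lies where $f^{-1}=\pi_2\circ\pi_1^{-1}$ is the isomorphism onto $Y_2\setminus S_2$, forcing $f^{-1}(\eta')$ to be the generic point of a prime divisor of $Y_2$. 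But $\pi_1^{-1}$ carries $\eta'$ back to $\eta\in D\subseteq E_2$, so $f^{-1}(\eta')=\pi_2(\eta)\in Z_2$, a set of codimension $\ge 2$ — a contradiction. Hence $E_1=E_2=:E$.

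For (ii), note first that (i) and the blow-up formula give
\[
\pi_2^*K_{Y_2}-\pi_1^*K_{Y_1}=(c_1-1)E-(c_2-1)E=(c_1-c_2)\,E,
\]
so the task reduces to proving $c_1>c_2$. I would test this against a line $\ell$ in a fibre of $p_2$: then $\pi_2$ contracts $\ell$, so $\pi_2^*K_{Y_2}\cdot\ell=0$ while $E\cdot\ell=-1$. Setting $\psi:=\phi_1\circ\pi_1=\phi_2\circ\pi_2\colon X\to W$, the curve $\ell$ is $\psi$-contracted, so $\pi_1(\ell)$ lies in a fibre of $\phi_1$; granting (see below) that $\pi_1$ does not contract $\ell$, the image $\bar\ell:=\pi_1(\ell)$ is a genuine $\phi_1$-contracted curve with $(\pi_1)_*\ell=d\,\bar\ell$, $d\ge 1$. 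Since $K_{Y_1}$ is $\phi_1$-ample, $\pi_1^*K_{Y_1}\cdot\ell=d\,(K_{Y_1}\cdot\bar\ell)>0$, and intersecting the displayed identity with $\ell$ yields $c_1-c_2=\pi_1^*K_{Y_1}\cdot\ell>0$, so $(c_1-c_2)E$ is effective.

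The step I expect to be the real obstacle — the only one that is not bookkeeping — is ruling out that $\pi_1$ contracts the $p_2$-fibre line $\ell$, i.e. that the two blow-down rays of $\psi\colon X\to W$ coincide. The way I would handle it: if $\pi_1$ contracted $\ell$, then $\ell$ would lie in a fibre of $p_1$, and since $E\cdot\ell=-1$ this pins the class $[\ell]$ to be exactly that of a $p_1$-fibre line; then $\pi_1$ and $\pi_2$ would be the contraction of one and the same extremal ray over $W$, making $Y_1\cong Y_2$ with $\phi_1$ identified with $\phi_2$, which is impossible because $-K_{Y_1}$ and $K_{Y_1}$ cannot simultaneously be $\phi_1$-ample. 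This is precisely where condition (3) of the definition of a flip enters in an essential way, and it is why the conclusion is the strict relation $Y_2>_KY_1$; alternatively one may simply invoke $\rho(X/W)=2$ together with the fact that an honest flip has two distinct extremal contractions over $W$.
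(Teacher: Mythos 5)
Your proof is correct, and it takes a genuinely different route from the paper's, which adapts the intersection--theoretic argument of the appendix of \cite{wang2002cohomology} and Lemma 4.4 of \cite{kawamata2002d}. There, both claims are extracted from pairing the identity $\pi_1^*K_{Y_1}+(r_1-1)E_1\sim \pi_2^*K_{Y_2}+(r_2-1)E_2$ against the $1$-cycle $\pi_2^*H_2^{\,n-r_2}\cdot H^{\,r_2-2}\cdot E_2$ for ample $H_2$, $H$: if $E_1\neq E_2$ the term $(r_2-1)E_2^2\cdot(\cdots)$ is strictly negative while the terms on the other side are nonnegative, with $\pi_1^*K_{Y_1}\cdot(\cdots)>0$ coming from $\phi_1$-ampleness, which gives (i); the same pairing applied to $(r_1-r_2)E$ then forces $r_1>r_2$. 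You instead prove (i) with no intersection theory, by chasing a hypothetical component of $E_2$ not contained in $E_1$ through the codimension-one isomorphism $f$ and landing in the codimension-$\geq 2$ set $Z_2$; and you prove (ii) by pairing against a single $p_2$-fibre line $\ell$. These are closer than they first appear: the paper's test cycle is numerically a positive multiple of the class of a $p_2$-fibre line, so its crucial inequality $\pi_2^*H_2^{\,n-r_2}\cdot H^{\,r_2-2}\cdot E\cdot\pi_1^*K_{Y_1}>0$ is exactly your $\pi_1^*K_{Y_1}\cdot\ell>0$. The paper asserts this directly from $\phi_1$-ampleness, which tacitly assumes $\pi_1$ does not contract $\ell$; you correctly identify this as the one non-formal step and close it by observing that otherwise $\pi_1$ and $\pi_2$ would contract the same extremal ray over $W$, forcing $Y_1\cong Y_2$ and contradicting the opposite ampleness conditions in the definition of a flip. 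So your argument is more elementary for (i) and makes explicit, via condition (3) of the flip definition, a point the paper's averaging argument leaves implicit; the paper's version has the advantage of treating (i) and (ii) uniformly with one computation and without any case analysis on contracted curves.
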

\begin{proof}
We follow the argument in the appendix of \cite{wang2002cohomology} and Lemma 4.4 in \cite{kawamata2002d}. Let $r_i$ be the codimension of $Z_i$ in $Y_i$ $(i=1, 2)$. By the formula for the canonical divisor of a smooth blowing-up, we have
\begin{align*}
&K_{X}\sim\pi_1^*K_{Y_1}+ (r_1-1) E_1, \\
&K_X\sim\pi_2^*K_{Y_2}+ (r_2-1) E_2.
\end{align*}
%Note that $\pi_1(E_2)\subset Z_1$ since $\pi_1^*K_{Y_1}=\pi_2^*K_{Y_2}$ along $\pi_1^{-1}(Y_1\backslash Z_1)$. 
Assume that $E_1\neq E_2$, and let $H_2$ and $H$ be ample divisors on $Y_2$ and $X$, respectively, and let $n=\dim X$. Since $E_2$ is an exceptional divisor, we have
$$\pi_2^*H_2^{(n-r_2)}\cdot H^{r_2-2}\cdot (r_2-1)E_2^2<0.$$ Meanwhile, 
$$\pi_2^*H_2^{(n-r_2)}\cdot H^{r_2-2}\cdot (r_2-1) E_2^2=\pi_2^*H_2^{(n-r_2)}\cdot H^{r_2-2}\cdot E_2 \cdot (\pi_1^*K_{Y_1}+ (r_1-1) E_1-\pi_2^*K_{Y_2}).$$
Note also that $\pi_2^*H_2^{(n-r_2)}\cdot H^{r_2-2}\cdot E_2\cdot  \pi_2^*K_{Y_2}=0$ and 
$ \pi_2^*H_2^{(n-r_2)}\cdot H^{r_2-2}\cdot E_2\cdot (r_1-1) E_1\geq 0$.  
Therefore
$$\pi_2^*H_2^{(n-r_2)}\cdot H^{r_2-2}\cdot (r_2-1) E_2^2\geq \pi_2^*H_2^{(n-r_2)}\cdot H^{r_2-2}\cdot E_2 \cdot \pi_1^* K_{Y_1}> 0$$
since $K_{Y_1}$ is $\phi_1$-ample. A contradiction arises and hence, 
\begin{align}
E_1=E_2=:E\quad \text{and}\quad \pi_2^*K_{X_2}-\pi_1^*K_{X_1}\sim(r_1-r_2) E. \label{eq:ex}
\end{align}
By taking the intersection product with $(\pi_2^*H_2^{(n-r_2)}\cdot H^{r_2-2}\cdot E)$ on both sides of (\ref{eq:ex}), we have
$$\pi_2^*H_2^{(n-r_2)}\cdot H^{r_2-2}\cdot E\cdot (\pi_2^*K_{X_2}-\pi_1^*K_{X_1})<0.$$ 
It follows intermediately that 
$$\pi_2^*H_2^{(n-r_2)}\cdot H^{r_2-2}\cdot E\cdot (r_1-r_2) E<0.$$ 
Consequently, $r_1>r_2$ and $Y_2>_KY_1$. 

\end{proof}

We can therefore summarize the geometry of a simple flip $f$ in the following diagram:
\begin{equation}\label{sflip2}
 \begin{tikzcd}
&&E \arrow[ddll,swap,"p_2"] \arrow[d, hook]\arrow[ddrr,"p_1"] \\
&&  \arrow[dl,swap, "\pi_2"]  X \arrow[dr, "\pi_1"]\\
   Z_2\arrow[r,hook] \arrow[ddrr, swap, "q_2"] & Y_2 \arrow[rr,dashed, "f"]  \arrow[dr, swap, "\phi_2"]   & &  \arrow[dl, "\phi_1"] Y_1& \arrow[l, hook',swap]  \arrow[ddll, "q_1"] Z_1.\\
        & & W \\
        && B \arrow[u, hook]
\end{tikzcd}
\end{equation}

To achieve such a simple flip (\ref{sflip2}), the following necessary conditions must be satisfied:

\begin{enumerate}
\item[(1)]
$E$ admits two smooth projective bundle structures given by $p_1$ and $p_2$.
\item[(2)]
By Corollary 2.6 of Kanemitsu \cite{kanemitsu2022mukai}, the contraction $q_1\circ p_1=q_2\circ p_2: E\rightarrow B$ is smooth, and each fiber is a Fano variety of Picard number two. 
\item[(3)]
$\shO_E(E)$ is the tautological line bundles for the projective bundle $p_1$, as well as $p_2$. 
\end{enumerate}
%We will show in the next section that these two conditions almost determine the simple flip itself.  

\subsection{Construction of Simple Flips}
We start with a smooth projective variety $Y_2$ and a smooth subvariety $Z_2$ of codimension $r_2$ ($r_2 \geq 2$). Let $X$ be the blowing-up of $Y_2$ along $Z_2$ with the exceptional divisor $E$, and let $p_2$ be the contraction of $E$ onto $Z_2$. 

\begin{lemma}\label{kineq}
Assume the following conditions hold:
\begin{enumerate}
%\item[(1)] $Z_2$ admits a smooth extremal contraction onto B: $q_2: Z_2\rightarrow B$.
%\item[(2)] For each fiber $F$ of $q_2 \circ p_2$ is a Fano variety of Picard number two.
\item[(1)] $E$ admits another smooth projective bundle structure $p_1: E\rightarrow Z_1$, where $Z_1$ is smooth projective;
\item[(2)] $r_1>r_2$, where $r_i-1$ is the dimension of fibers of $p_i$ ($i=1, 2$);
\item[(3)] the restriction of $\shO(-E)$ to each fiber $\PP^{r_1-1}$ of $p_1$ is $\shO(1)$. 
\end{enumerate} 
Then we have the following:
\begin{itemize}

\item[(i)] The extremal rays of $p_1$ and $p_2$ form an extremal face of $E$. Moreover, there is a smooth extremal contraction from $E$ onto $B$ such that the following diagram is commutative:

\begin{equation}
\begin{tikzcd}
& E\arrow[dl, swap, "p_2"] \arrow [dr, "p_1"]\\
Z_2\arrow[dr, swap, "q_2"]&&Z_1 \arrow [dl, "q_1"] \\
& B
\end{tikzcd}.
\end{equation}
\item[(ii)]
There is a blowing-down morphism $\pi_1$ from $X$ onto a smooth complex manifold $Y_1$ that is the smooth blowing-up of $Y_1$ along $Z_1$ and covers $p_1$. 

\item[(iii)]
If there is a birational contraction $\phi_2$ from  $Y_2$ onto $W$ with the exceptional locus $Z_2$ that covers the extremal contraction $q_2$, then $Y_1$ is projective and the map (\ref{sflip2}) $f: Y_2\dashrightarrow Y_1$ is a simple flip. 
\end{itemize}
\end{lemma}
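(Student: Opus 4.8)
The plan is to establish (i)–(iii) in order, using the geometry of the Mori cone of $E$ together with contraction theorems. For (i), I would first observe that $p_1$ and $p_2$ are both $\PP^{r_i-1}$-bundle projections, hence each contracts a single extremal ray $R_i \subset \overline{NE}(E)$; the classes of lines in the fibers of $p_1$ and $p_2$ are distinct since $r_1 \neq r_2$ forces the two fibrations to differ. The key point is to show that $R_1 + R_2$ spans an \emph{extremal face}: I would argue that a curve $C$ with $[C] \in R_1 + R_2$ satisfies $C \cdot \shO_E(E) \leq 0$ on the $p_1$-side (by hypothesis (3), $\shO_E(-E)$ restricts to $\shO(1)$ on $p_1$-fibers) while on the $p_2$-side $\shO_E(-E)$ is $p_2$-relatively ample (tautological), and combine this with the fact that the relative Picard numbers are both $1$ so that $\rho(E) - 2 = \rho(B')$ for the Stein factorization $B'$ of the composite. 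Then the contraction theorem (or the direct argument in Kanemitsu \cite{kanemitsu2022mukai}, Corollary 2.6, which was already invoked in the necessary conditions above) produces the smooth extremal contraction $E \to B$ through which both $p_1$ and $p_2$ factor, giving the commutative square; smoothness of $q_1, q_2$ and $B$ follows because $E$ is smooth and all fibers are Fano of Picard number two.

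For (ii), the strategy is to apply the blowing-down criterion: $E \subset X$ is a smooth divisor with a $\PP^{r_1-1}$-bundle structure $p_1 : E \to Z_1$ such that $\shO_E(E)$ restricts to $\shO(-1)$ on every $p_1$-fiber by hypothesis (3). By the Fujiki–Nakano / Artin-type contractibility criterion (in the form used by Wang \cite{wang2002cohomology} and Kawamata \cite{kawamata2002d}), there is a smooth blowing-down $\pi_1 : X \to Y_1$ onto a smooth variety $Y_1$, contracting $E$ onto a smooth center $Z_1 \hookrightarrow Y_1$ with exceptional divisor $E$, and by construction $\pi_1|_E = p_1$ so $\pi_1$ covers $p_1$. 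A priori $Y_1$ is only a smooth complex manifold (algebraic after we get projectivity in (iii)); I would be careful to state it as such, exactly as the lemma does.

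For (iii), assume $\phi_2 : Y_2 \to W$ is a birational contraction with exceptional locus $Z_2$ covering $q_2 : Z_2 \to B$. The plan is: (a) show $Y_1$ is projective — the composite $X \to Y_1$ contracts the ray $R_{p_1}$, and the other contraction of $X$ relevant here, namely the composite $X \to Y_2 \to W$, together with $\pi_1$, exhibits $Y_1$ as obtained from $W$ by a contraction of the ray complementary to $Z_1$; concretely, one uses that $\phi_2 \circ \pi_2 : X \to W$ and $\pi_1 : X \to Y_1$ have the same Stein factorization over $W$ restricted away from $E$, so $Y_1 \to W$ is a morphism, and since $W$ is projective and $Y_1 \to W$ is of relative Picard number one with $-E$ (hence $K_{Y_1}$, via $K_X \sim \pi_1^* K_{Y_1} + (r_1-1)E$ and $K_X \sim (\phi_2\pi_2)^* K_W + \cdots$) relatively ample, $Y_1$ is projective. (b) Then $f = \pi_1 \circ \pi_2^{-1}$ is birational, $Y_2 \to W \leftarrow Y_1$ are small contractions because the exceptional loci $Z_2, Z_1$ have codimension $r_2, r_1 \geq 2$, each is elementary since $\rho(Y_i/W) = 1$ (inherited from $\rho(E/B) $ minus one), and finally by Proposition \ref{exp} (or rather its proof, i.e. the canonical bundle formula) $\pi_2^* K_{Y_2} - \pi_1^* K_{Y_1} \sim (r_1 - r_2)E$ with $r_1 > r_2$, so $-K_{Y_2}$ is $\phi_2$-ample and $K_{Y_1}$ is $\phi_1$-ample. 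Hence $f$ is a simple flip.

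I expect the main obstacle to be part (iii), specifically proving projectivity of $Y_1$: (i) and (ii) are essentially the extremal-contraction and Fujiki–Nakano machinery applied to hypotheses (1)–(3), but deducing that the complex manifold $Y_1$ is actually projective genuinely requires the input of $\phi_2$ and $W$, and pinning down precisely why $Y_1 \to W$ is a morphism (rather than merely a birational map) — i.e. that the contraction of the $p_1$-ray on $X$ descends through $W$ — is the delicate step, for which I would lean on the rigidity of $W$ (normal, projective) and the fact that $\pi_1$ and $\phi_2 \circ \pi_2$ agree off the common exceptional divisor $E$.
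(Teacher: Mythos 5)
Your proposal is correct in outline and agrees with the paper on (i) and (ii): claim (i) is exactly Corollary 2.6 of Kanemitsu \cite{kanemitsu2022mukai} (the paper notes only hypothesis (1) is needed for this), and claim (ii) is the Nakano--Fujiki blowing-down criterion \cite{nakano1971inverse}, \cite{fujiki1971supplement} applied using hypothesis (3). The genuine divergence is in (iii), and specifically in how projectivity of $Y_1$ is obtained. You propose to first produce the morphism $\phi_1\colon Y_1\to W$ by a Stein-factorization/rigidity argument and then deduce projectivity of the Moishezon manifold $Y_1$ from $\phi_1$-ampleness of $K_{Y_1}$ over the projective base $W$; this can be made to work, but it forces you to discuss relative ampleness on a space not yet known to be algebraic. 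The paper inverts the order: it first restricts $\pi_2^*K_{Y_2}-\pi_1^*K_{Y_1}\sim(r_1-r_2)E$ to the fibers of $q_i$ over $B$ and uses that these fibers are Fano of Picard number one (again Corollary 2.6 of \cite{kanemitsu2022mukai}) to conclude that $-K_{Y_2}$ is $\phi_2$-ample and $K_{Y_1}$ is $\phi_1$-ample; it then computes, for a line $C_1$ generating the $p_1$-ray, that $C_1\cdot K_X=C_1\cdot\pi_2^*K_{Y_2}+(r_2-1)\,C_1\cdot E<0$, so the Mori contraction theorem on the projective variety $X$ produces a contraction of this ray with projective target, and the uniqueness of contractions (Theorem 7.39 and Lemma 1.15 of \cite{debarre2013higher}) identifies it with $\pi_1$ --- projectivity of $Y_1$ comes for free, and $\phi_1$ is then obtained by factoring $\phi_2\circ\pi_2$ through $\pi_1$ via the same rigidity lemma. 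This $K_X$-negativity of the $p_1$-ray is the one ingredient your sketch does not articulate, and it is what lets the paper avoid the delicate analytic-to-algebraic step you flag; you should also make explicit that the ampleness of $\pm K_{Y_i}$ on the fibers $Z_{ib}$ uses their Picard number being one, not merely the canonical bundle difference formula from Proposition \ref{exp}.
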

\begin{proof}
Claim (i) follows from Corollary 2.6 of Kanemitsu \cite{kanemitsu2022mukai}. Note that only condition (1) is used here. Claim (ii) follows from the blowing-down criterion by Nakano \cite{nakano1971inverse} and Fujiki \cite{fujiki1971supplement}. 

For claim (iii), by the formula for the canonical divisor $K_X$ of the two smooth blowing-ups $\pi_1$ and $\pi_2$, we have
\begin{align}
 \pi_2^*K_{Y_2}-\pi_1^*K_{Y_1}\sim(r_1-r_2) E. \label{adj}
 \end{align}
Let $Z_{ib}$ denote the fiber of $q_i$ over $b$ for each $b\in B$. Restricting (\ref{adj}) to $b$, we have
\begin{align}
\shO(p_2^*K_{Y_2}|_{Z_{2b}}-p_1^*K_{Y_1}|_{Z_{1b}})=\shO_{E_b}(-1)^{\otimes{(r_1-r_2)}}.\label{adj2}
\end{align}
If we restrict (\ref{adj2}) further to a fiber $\PP^{r_1-1}$ of $p_1$, then
\begin{align}
 \shO(p_2^*K_{Y_2}|_{\PP^{r_1-1}})=\shO_{\PP^{r_1-1}}(r_2-r_1).\label{adj3}
 \end{align}
Note that $r_1>r_2$ (condition (2)), and $Z_{2b}$ is a Fano variety of Picard number 1 by Corollary 2.6 of Kanemitsu \cite{kanemitsu2022mukai}. Therefore, $K_{Y_2}|_{Z_{2b}}$ must be anti-ample. Similarly, $K_{Y_1}|_{Z_{1b}}$ must be ample. Thus, $\phi_2$ is a flip contraction. 

Let $C_1$ be a line that generates the extremal ray of $p_1$. It is easy to obtain that 
$$C_1\cdot K_X=C_1\cdot \pi_2^*K_{Y_2}+C_1\cdot (r_2-1)E<0,$$ 
since $-K_{Y_2}$ is $\phi_2$-ample and $C_1\cdot E<0$ (condition (3)). Hence, $C_1$ is $K_X$-negative.  The uniqueness of the contraction map (Theorem 7.39 and Lemma 1.15 in \cite{debarre2013higher}) implies that $\pi_1: X\rightarrow Y_1$ is the contraction map of $C_1$. Then $Y_1$ is projective by construction. By Lemma 1.15 in \cite{debarre2013higher}, there exists $\phi_1: Y_1\rightarrow W$ such that the contraction $\phi_2\circ \pi_2: X\rightarrow W$ factors through $\pi_1: X\rightarrow Y_1$. Therefore, $f=\phi_1 \circ \phi_2^{-1}$ is a simple flip.

\end{proof}

\subsection{Simple Flips of homogeneous Type}
In this subsection, we focus on \emph{simple flips of homogeneous type}. 

\begin{definition}\label{defgflip}
A simple flip $f$ is called of \emph{homogeneous type} if the exceptional divisor $E$ in Proposition \ref{exp} is a homogeneous variety. If the Picard number $\rho(E)=r+1$, then $f$ is called of \emph{rank} $r$. Moreover, $f$ is called a \emph{generalized Grassmannian flip} if $r=1$.   
\end{definition}

If $G$ is a complex semi-simple algebraic group, then any $G$-homogeneous variety $M$ is in one-to-one correspondence to a \emph{marked Dynkin diagram}, \emph{i.e.}, a (possibly disconnected) Dynkin diagram $\Gamma$ with a set of nodes $I$ called \emph{marking} (\cite{fulton2013representation}). Specifically, $M=G/P_I$, in which $G$ corresponds to $\Gamma$, and the parabolic subgroup $P_I$ of $G$   corresponds to the marking $I$. For any sub-marking $J\subset I$, there is an inclusion of parabolic subgroups $P_I\subset P_J$ that gives a projective morphism:
$$p_{I, J}:\, G/P_I\longrightarrow G/P_J,$$
and each fiber of $p_{I,J}$ is also a homogeneous variety (isomorphic to $P_J/P_I$). 

When $G$ is simple, \emph{i.e.}, $\Gamma$ is connected, $P_J/P_I$ corresponds to a marked Dynkin diagram called \emph{Tits shadow}. It is   
a diagram with the marking $I\backslash J$ obtained by removing from $\Gamma$ all the nodes in $J$ and the edges to which those nodes are connected (see \cite{tits1954groupes} and \cite{landsberg2003projective}). Furthermore, it is not hard to generalize this principle to semi-simple,  but not necessarily simple groups.

\begin{example}\label{C3fl}
We consider $G=Sp(6, \CC)$ and the Dynkin diagram of type $C_3$ with the marking  $I=\{\alpha_2, \alpha_3\}$ 
\begin{equation}\label{C3}
 \dynkin[edge length=1.5, root radius=.1cm, labels={\alpha_1, \alpha_2, \alpha_3}]{C}{*XX}.
\end{equation}
%It is known that
Then the corresponding homogeneous varieties are
\begin{align*}
&G/P_{\alpha_1\cup\alpha_2}=\IFl(1, 2, \CC^6)\\
&=\{(V_1, V_2)|\, V_1\leq V_2\leq \CC^6, \,\text{dim } V_1=1, \text{dim } V_2=2, \omega|_{V_2}=0\},
\end{align*}
%and
$$G/P_{\alpha_2}=\IGr(2, 6), \quad G/P_{\alpha_3}=\IGr(3, 6).$$
By means of Tits shadow,  the general fiber of $p_3: \IFl(1, 2, \CC^6)\rightarrow \IGr(3, 6)$ is isomorphic to 
\begin{equation*}
 \dynkin[edge length=1.5, root radius=.1cm, labels={\alpha_1, \alpha_2}]{A}{*X}\longleftrightarrow \PP^2, 
  \end{equation*}
 and the general fiber of $p_2: \IFl(1, 2, \CC^6)\rightarrow \IGr(2, 6)$ is $\PP^1$ is isomorphic to
 \begin{equation*}
 \dynkin[edge length=1.5, root radius=.1cm, labels={ \alpha_3}]{A}{X}\longleftrightarrow \PP^1.
 \end{equation*}
 In fact,
$$\Fl(2,3,6)=\PP_{\IGr(3,6)}(U_3^{\vee})=\PP_{\IGr(2,6)}(N_{\omega}).$$
\end{example}

Let ($\Gamma, I)$ be a marked Dynkin diagram which corresponds to a complex semi-simple algebraic group $G$ and a parabolic subgroup $P_I$. Consider two different submarkings $I_{\alpha}$ and $I_{\beta}$ corresponding to two parabolic subgroups $P_{I_{\alpha}}\supset P_I $ and $P_{I_{\beta}}\supset P_I$, respectively, and the two natural projections
$$p_{\alpha}: G/P_I\longrightarrow G/P_{I_{\alpha}},\qquad p_{\beta}: G/P_I\longrightarrow G/P_{I_{\beta}}.$$
By Lemma \ref{kineq}, we need to find such a variety $E$ with two different projective bundle structures. In other words, we will need the following condition:
\begin{namedtheorem}[\hypertarget{thm:asp}{Condition (A)}]
The general fibers of both $p_{\alpha}$ and $p_{\beta}$ are projective spaces. 
\end{namedtheorem}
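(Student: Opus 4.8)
Condition (A) is the hypothesis we must impose on $(\Gamma,I)$ and on the two submarkings in order to apply Lemma~\ref{kineq}, so verifying it amounts to recasting it into a purely combinatorial statement about marked Dynkin diagrams; the plan is as follows. First, since $E=G/P_I$ is homogeneous under $G$, the morphism $p_\gamma\colon G/P_I\to G/P_{I_\gamma}$ is a $G$-equivariant bundle whose fiber over every point is the generalized flag variety $P_{I_\gamma}/P_I$. Hence ``general fiber'' may be replaced by ``every fiber'', and Condition (A) is equivalent to requiring that both $P_{I_\alpha}/P_I$ and $P_{I_\beta}/P_I$ be projective spaces.

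Next I would invoke the Tits shadow: when $G$ is simple, $P_{I_\gamma}/P_I$ is the flag variety of the marked diagram obtained from $\Gamma$ by deleting the nodes of $I_\gamma$ and their incident edges while retaining $I\setminus I_\gamma$ as the marking, and a connected component carrying no marked node contributes only a point; the semisimple case is the product over the simple factors of $G$. Thus $P_{I_\gamma}/P_I$ is a product of irreducible flag varieties, one for each marked component, and it is a single projective space if and only if there is exactly one marked component and that component is an irreducible flag variety isomorphic to $\PP^{k}$ for some $k$. In the rank-one situation $|I|=2$ and $I_\gamma$ is obtained by removing one node, so this is just a statement about where the surviving marked node lies after a single deletion from $\Gamma$.

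The remaining ingredient is the classification of irreducible flag varieties that are projective spaces: for $G'$ simple and $P'_v$ the maximal parabolic attached to a node $v$, one has $G'/P'_v\cong\PP^{k}$ if and only if $(G',v)$ is $(A_k,\alpha_1)$, $(A_k,\alpha_k)$, or $(C_m,\alpha_1)$ with $k=2m-1$, up to the usual low-rank coincidences among Dynkin diagrams. I would prove this by embedding $G'/P'_v$ as the closed $G'$-orbit in $\PP(V_{\varpi_v})$ for the fundamental representation $V_{\varpi_v}$ attached to $v$: since $\PP^{k}$ is its own minimal homogeneous embedding, $G'/P'_v\cong\PP^k$ forces $\dim G'/P'_v=\dim V_{\varpi_v}-1$, and comparing these two quantities over all simple types and fundamental weights leaves exactly the listed cases (equivalently, this is the list of transitive actions of simple algebraic groups on a projective space). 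Assembling the three steps, Condition (A) becomes a finite, mechanical verification over the admissible markings; running it — together with the companion conditions $r_\alpha\neq r_\beta$ and the tautological-bundle hypothesis of Lemma~\ref{kineq}, and cross-checking against the classification of simple flops of homogeneous type in \cite{kanemitsu2022mukai} — yields Table~\ref{cgflip}. In the running example $(C_3,\alpha_2,\alpha_3)$, deleting $\alpha_2$ (resp.\ $\alpha_3$) from the $C_3$ diagram leaves $A_1$ (resp.\ $A_2$ marked at an end) as the unique marked component, so the fibers of $p_2$ and $p_3$ are $\PP^1$ and $\PP^2$, in agreement with Example~\ref{C3fl}.

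The main obstacle is the last step: a clean proof that the only irreducible flag varieties isomorphic to projective space are the ones listed, i.e., ruling out the close competitors — chiefly the quadrics, and more generally every rational homogeneous space of Picard number one other than $(A_k,\text{end node})$ and $(C_m,\alpha_1)$. Once this is in hand, the rest is bookkeeping with deleted nodes of Dynkin diagrams.
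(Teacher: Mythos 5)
Your proposal is correct and follows essentially the same route as the paper: Condition~(A) is a hypothesis rather than a theorem, and the paper verifies it exactly as you describe --- by homogeneity every fiber of $p_\gamma$ is $P_{I_\gamma}/P_I$, which is read off from the Tits shadow of the marked Dynkin diagram and then checked case by case (as in Example~\ref{C3fl} and the proof of Theorem~\ref{main2}). The only thing you add beyond the paper is an explicit justification of which $G'/P'_v$ are projective spaces (the $(A_k,\alpha_1)$, $(A_k,\alpha_k)$, $(C_m,\alpha_1)$ list, up to coincidences such as $B_2/P_2\cong\PP^3$), which the paper uses implicitly.
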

Since the relative Picard number $\rho(p_i)=1$, $I\backslash I_{i}$ consists of a single node for $i=\alpha, \beta$. Denote $I\backslash I_{\alpha}$ by $\{\beta\}$, $I\backslash I_{\beta}$ by $\{\alpha\}$ and $I_{\alpha}\cap I_{\beta}$ by $J$. Then $I_{\alpha}= J\cup\{\alpha\}, 
I_{\beta}= J\cup\{\beta\}$ and $I=J\cup\{\alpha\}\cup\{\beta\}$. Consider the following diagram:
\begin{equation}\label{basic}
 \begin{tikzcd}
 &\arrow[dl, swap,"p_{\alpha}"]  G/P_I\arrow[dr, "p_{\beta}"]\\
G/P_{I_{\alpha}}\arrow[dr, swap, "q_{\alpha}"] && G/P_{I_{\beta}}\arrow[dl, "q_{\beta}"]\\
& G/P_J
\end{tikzcd}.
\end{equation}
Note that $\rho(q_i)=1$, and we denote by $h_i$ the ample generator of the relative Picard group of $q_i$. 

 \begin{lemma}\label{2pj}
Suppose that $(\Gamma, I)$ satisfies \bref{thm:asp}{Condition (A)}, and let $E_i=p_{i*}(\shO(p_{\alpha}^*h_{\alpha}+p_{\beta}^*h_{\beta}))$ for $i=\alpha, \beta$. Then we have
\begin{enumerate}
\item $E_i$ is a vector bundle on $G/P_{I_i}$, and 
\item $p_i$ is isomorphic to the projective bundle associated to $E_i^{\vee}$, i.e., $G/P_I=\PP(E_{i}^{\vee})$. 
\end{enumerate}
\end{lemma}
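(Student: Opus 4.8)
The plan is to analyze the pushforward $E_i = p_{i*}(\shO(p_\alpha^* h_\alpha + p_\beta^* h_\beta))$ fiberwise over $G/P_{I_i}$, using the fact (\hyperlink{thm:asp}{Condition (A)}) that the general fiber of $p_i$ is a projective space $\PP^{r_i - 1}$, together with representation-theoretic rigidity of homogeneous bundles. First I would fix $i = \alpha$ (the case $i = \beta$ is symmetric) and compute the restriction of the line bundle $L := \shO(p_\alpha^* h_\alpha + p_\beta^* h_\beta)$ to a fiber $F = p_\alpha^{-1}(x) \cong \PP^{r_\alpha - 1}$. Since $p_\alpha^* h_\alpha$ restricts trivially to $F$, we get $L|_F = (p_\beta^* h_\beta)|_F$. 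The composite $F \hookrightarrow G/P_I \xrightarrow{p_\beta} G/P_{I_\beta}$ is, on the Tits-shadow level, the embedding of the fiber $P_{I_\alpha}/P_I$ into $G/P_{I_\beta}$; because $F$ is a projective space of Picard rank $1$ and $h_\beta$ is the ample generator of the relative Picard group of $q_\beta$, this restriction is $\shO_{\PP^{r_\alpha-1}}(d)$ for some positive integer $d$. The key subclaim is that $d = 1$: this should follow from the description of the fibers and the incidence structure of the two flag projections — concretely, a line in $F$ generating its Picard group maps to a line in $G/P_{I_\beta}$ (a minimal rational curve), on which $h_\beta$ has degree $1$. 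Granting this, $L|_F = \shO_{\PP^{r_\alpha - 1}}(1)$, so $H^0(F, L|_F) \cong \CC^{r_\alpha}$ and $H^{>0}(F, L|_F) = 0$ for every fiber.

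Next I would invoke cohomology-and-base-change (Grauert's theorem): since $p_\alpha$ is a smooth proper morphism with $\PP^{r_\alpha - 1}$-fibers and $h^0(F, L|_F)$ is constant equal to $r_\alpha$ while higher cohomology vanishes, $E_\alpha = p_{\alpha *} L$ is locally free of rank $r_\alpha$ on $G/P_{I_\alpha}$ and its formation commutes with base change — in particular $E_\alpha \otimes \kappa(x) \cong H^0(F, L|_F)$ for all $x$. This proves part (1). For part (2), I would use the standard adjunction: the surjection $p_\alpha^* E_\alpha = p_\alpha^* p_{\alpha *} L \twoheadrightarrow L$ (surjective because it is surjective fiberwise, being $H^0(F,\shO(1))\otimes\shO_F \twoheadrightarrow \shO_F(1)$ on each $F$) defines a morphism $G/P_I \to \PP(E_\alpha^\vee)$ over $G/P_{I_\alpha}$ classifying the quotient line bundle $L$. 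On each fiber this is the identity $\PP^{r_\alpha - 1} \xrightarrow{|\shO(1)|} \PP^{r_\alpha - 1}$, hence an isomorphism on fibers, hence an isomorphism of $G/P_{I_\alpha}$-schemes; thus $G/P_I \cong \PP(E_\alpha^\vee)$ and $\shO_{\PP(E_\alpha^\vee)}(1)$ corresponds to $L$. (One should check the sign/duality convention matches the paper's $\PP(F) = \Proj \Sym^\bullet F^\vee$; with that convention a surjection $p_\alpha^* E_\alpha \to L$ is exactly the data of a map to $\PP(E_\alpha^\vee)$, so the statement is as written.)

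The main obstacle I anticipate is the subclaim $d = 1$, i.e., that $L = \shO(p_\alpha^* h_\alpha + p_\beta^* h_\beta)$ restricts to $\shO(1)$ rather than $\shO(d)$ with $d \geq 2$ on the $p_\alpha$-fibers. This is really a statement about how the ample generator $h_\beta$ of $\Pic(G/P_{I_\beta})/q_\beta^*\Pic(G/P_J)$ pairs with the minimal rational curves contracted by $p_\alpha$, and it hinges on \hyperlink{thm:asp}{Condition (A)} being used in its full strength: the fiber of $p_\alpha$ is not merely Fano of Picard rank $1$ but an actual projective space, and the embedding $F \hookrightarrow G/P_{I_\beta}$ (induced by $p_\beta$) is linear in the appropriate sense. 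I would establish this by the Tits-shadow combinatorics of Example \ref{C3fl}: the relevant fiber of $p_\beta$ restricted to $F$, read off from removing the marked node, is a smaller generalized flag variety whose minimal curves have $h_\beta$-degree $1$ by the definition of $h_\beta$. If there is a residual case where $d$ could exceed $1$, that case would violate the projective-bundle conclusion and hence would be excluded by the standing hypothesis, so the lemma's hypotheses are exactly calibrated to force $d = 1$. Everything else — base change, local freeness, the universal property of projective bundles — is formal once this normalization is pinned down.
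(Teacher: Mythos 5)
Your proposal is correct and follows essentially the same route as the paper: both arguments reduce to showing that $\shO(p_\alpha^*h_\alpha+p_\beta^*h_\beta)$ restricts to $\shO_{\PP^{r_\alpha-1}}(1)$ on each $p_\alpha$-fiber (the paper phrases this as $p_\beta$ embedding the fiber into $G/P_{I_\beta}$ with $h_\beta$ restricting to the hyperplane class, which is your $d=1$ claim via minimal rational curves), and then identify $L$ as the relative dual tautological bundle of the projective fibration. Your closing fallback (``if $d>1$ the conclusion would fail, so the hypotheses exclude it'') is circular and should be dropped, but it is not needed since your curve-degree argument already settles $d=1$; the extra detail you supply on base change and the universal property is only making explicit what the paper leaves implicit.
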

 
\begin{proof}
Let $p=q_{\alpha}\circ p_{\alpha}$. By restricting diagram (\ref{basic}) to the fiber of $p$ over a point $x\in G/P_J$, we may assume that $I$ consists of $\{\alpha, \beta\}$, that $h_i$ is the ample generator of the Picard group of the generalized Grassmannian $G/P_i$, and that $p_i$ is the projection $G/P_{\alpha\cup \beta}\rightarrow G/P_i$. By \bref{thm:asp}{Condition (A)},  $\forall y\in G/P_{\alpha},\, p_{\alpha}^{-1}(y)\cong \PP^{r_{\alpha}-1}$ for some $r_{\alpha}\in \ZZ$. Moreover, $p_{\beta}$ maps the fiber $p_{\alpha}^{-1}(y)$ isomorphically onto its image in $G/P_{\beta}$. Then $\shO(-p_{\alpha}^*h_{\alpha}-p_{\beta}^*h_{\beta})|_{p_{\alpha}^{-1}(y)}=\shO(-h_\beta)|_{p_{\beta}(p_{\alpha}^{-1}(y))}=\shO_{\PP^{r_{\alpha}-1}}(-1)$.  That is, $\shO(p_{\alpha}^*h_{\alpha}+p_{\beta}^*h_{\beta})$ is just the relative tautological dual line bundle on the projective fibration $p_{\alpha}$, which means $E_{\alpha}^{\vee}$ is a vector bundle, and $p_{\alpha}$ is the projective bundle associated to it. By repeating the same argument to $p_{\beta}$, we can complete the proof.
\end{proof}

Let $X_i=\PP_{G/ P_{I_i}}(E_i^{\vee}\oplus \shO)$ ($i=\alpha, \beta$) and $Z_i$ be the closed subvariety induced from the canonical section  $\shO\hookrightarrow E_i^{\vee}\oplus \shO$. Note that $Z_i\cong G/P_{I_i}$.  Then $Bl_{Z_i} X_i\cong \PP_{\PP(E_i^{\vee})}(\shO(\xi_i)\oplus\shO):=X$, where $\shO(\xi_i)$ is the relative tautological line bundle of $p_i$. By Lemma \ref{2pj},  $\shO(\xi_i)=\shO(-p_{\alpha}^*h_{\alpha}-p_{\beta}^*h_{\beta})$. Taking together with the proof of Lemma \ref{kineq}, we have the following diagram:
 
\begin{equation}\label{pregflip}
 \begin{tikzcd}
&&E=G/P_{I} \arrow[ddll,swap,"p_{\alpha}"] \arrow[d, hook, "j"]\arrow[ddrr,"p_{\beta}"] \\
&&  \arrow[dl,swap, "\pi_{\alpha}"]  X \arrow[dr, "\pi_{\beta}"]\\
   G/P_{I_{\alpha}}\arrow[r,hook, "i_{\alpha}"] \arrow[drr, swap, "q_{\alpha}"] & X_{\alpha} \arrow[rr,dashed, "f"]   & &  X_{\beta}& \arrow[l, hook',swap, "i_{\beta}"]  \arrow[dll, "q_{\beta}"] G/P_{I_{\beta}},\\
        && G/P_J
\end{tikzcd}
\end{equation}
where $E$ is the exceptional divisor of the two blowing-ups.

\begin{lemma}\label{flip}
The birational map $f: X_{\alpha}\dashrightarrow X_{\beta}$ is a simple flip if $\dim\, G/P_{I_{\beta}}< \dim\, G/P_{I_{\alpha}}$. 
\end{lemma}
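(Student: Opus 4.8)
The plan is to verify that the data assembled in diagram (\ref{pregflip}) satisfies all three hypotheses of Lemma \ref{kineq}, with the role of $Y_2$ played by $X_\alpha$, the center $Z_2$ by $Z_\alpha\cong G/P_{I_\alpha}$, and the blowing-up $X = Bl_{Z_\alpha}X_\alpha$. Once the hypotheses are checked, Lemma \ref{kineq}(iii) delivers the conclusion verbatim, provided we also exhibit the required contraction $\phi_\alpha\colon X_\alpha\to W$ over which $q_\alpha$ is covered. So the proof is really a matter of unwinding definitions and invoking earlier results, plus one genuine geometric input: the existence of $\phi_\alpha$.

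First I would record the numerology. Writing $r_i - 1 = \dim P_{I}/P_{I_i} = \dim(\text{fiber of } p_i)$ for $i = \alpha,\beta$, the inclusion $G/P_I \hookrightarrow X$ as the exceptional divisor $E$ is by construction the exceptional divisor of $\pi_\alpha = Bl_{Z_\alpha}$, so the fiber dimension of $p_\alpha$ is $r_\alpha - 1$ where $r_\alpha = \codim(Z_\alpha, X_\alpha)$; since $X_\alpha = \PP_{G/P_{I_\alpha}}(E_\alpha^\vee\oplus\shO)$ and $Z_\alpha$ is the section, $\codim(Z_\alpha,X_\alpha) = \rank E_\alpha = \dim(G/P_I) - \dim(G/P_{I_\alpha}) + 1$; likewise $r_\beta - 1$ is the fiber dimension of $p_\beta$. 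Hence the hypothesis $\dim G/P_{I_\beta} < \dim G/P_{I_\alpha}$ translates into $r_\beta > r_\alpha$, which is exactly condition (2) of Lemma \ref{kineq} (with $\alpha$ in the role of index $2$ and $\beta$ in the role of index $1$). Condition (1) of Lemma \ref{kineq}, that $E = G/P_I$ carries a second smooth projective bundle structure $p_\beta\colon E\to G/P_{I_\beta}$ with smooth projective base, is immediate from Lemma \ref{2pj}(2) applied with $i=\beta$. Condition (3), that $\shO(-E)$ restricts to $\shO(1)$ on each $\PP^{r_\beta - 1}$-fiber of $p_\beta$: by the blowing-up geometry $\shO_E(E) = \shO_E(\xi_\alpha)$ is the relative tautological line bundle for $p_\alpha$, which by the computation preceding (\ref{pregflip}) equals $\shO(-p_\alpha^* h_\alpha - p_\beta^* h_\beta)$; restricting to a fiber of $p_\beta$ and arguing as in the proof of Lemma \ref{2pj} (the fiber $p_\beta^{-1}(z)\cong\PP^{r_\beta-1}$ maps isomorphically under $p_\alpha$, and $\shO(h_\alpha)$ pulls back to $\shO_{\PP^{r_\beta-1}}(1)$) gives $\shO(-E)|_{\PP^{r_\beta-1}} = \shO(1)$, which is condition (3).

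It remains to produce the birational contraction $\phi_\alpha\colon X_\alpha\to W$ with exceptional locus $Z_\alpha\cong G/P_{I_\alpha}$ covering the extremal contraction $q_\alpha\colon G/P_{I_\alpha}\to G/P_J$, as demanded in Lemma \ref{kineq}(iii). Here I would use the standard contraction of the zero section of the projective completion of a vector bundle: on $X_\alpha = \PP_{G/P_{I_\alpha}}(E_\alpha^\vee\oplus\shO)$ the complement of $Z_\alpha$ is the total space of $E_\alpha^\vee$, and the relative contraction of the zero section $Z_\alpha$ collapses it fiberwise to the cone; globally, because $E_\alpha^\vee$ is $q_\alpha$-relatively a direct sum structure compatible with $q_\alpha$ (concretely, on each fiber $Z_{\alpha,b} = q_\alpha^{-1}(b)$ the bundle $E_\alpha^\vee|_{Z_{\alpha,b}}$ is the relevant tautological-type bundle and $K_{X_\alpha}|_{Z_{\alpha,b}}$ is anti-ample as noted in the proof of Lemma \ref{kineq}), one obtains $\phi_\alpha$ as the contraction of the $K_{X_\alpha}$-negative extremal ray generated by lines in the fibers of $q_\alpha$ inside $Z_\alpha$. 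I would justify its existence by the cone/contraction theorem (Theorem 7.39 of \cite{debarre2013higher}) after checking that this ray is extremal and $K_{X_\alpha}$-negative — both of which follow from the projective-bundle structure together with $Z_{\alpha,b}$ being Fano of Picard number one (Corollary 2.6 of \cite{kanemitsu2022mukai}) and $r_\beta > r_\alpha$. With $\phi_\alpha$ in hand, Lemma \ref{kineq}(iii) applies and concludes that $f\colon X_\alpha\dashrightarrow X_\beta$ is a simple flip.

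The main obstacle is the construction of $\phi_\alpha$: unlike conditions (1)–(3), which are bookkeeping on top of Lemma \ref{2pj}, the existence of a global birational contraction of $Z_\alpha$ that descends the extremal contraction $q_\alpha$ is not automatic — one must know that the relative-over-$G/P_J$ contraction of the zero section glues to a global morphism, i.e. that the nef class realizing the fiberwise collapse extends to a globally semiample (or at least contraction-defining) class on $X_\alpha$. I expect this to follow because the relevant class is a combination of $q_\alpha^*(\text{ample})$ and the tautological class $\xi$ of $\PP(E_\alpha^\vee\oplus\shO)$, which is globally nef and big, and whose associated contraction is precisely the blow-down of $Z_\alpha$; but spelling this out cleanly — in particular identifying $W$ and checking that $\phi_\alpha$ is small, i.e. an isomorphism away from $Z_\alpha$ in codimension $\geq 2$, which needs $r_\alpha\geq 2$, equivalently $\rank E_\alpha\geq 2$, equivalently $\dim G/P_I > \dim G/P_{I_\alpha}$ — is where the care is required. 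Everything else is a direct appeal to Lemma \ref{kineq} and Lemma \ref{2pj}.
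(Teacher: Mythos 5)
Your proposal is correct and follows essentially the same route as the paper: reduce to Lemma \ref{kineq} by translating the dimension hypothesis into $r_\beta>r_\alpha$, and produce the contraction $\phi_\alpha$ via the contraction theorem applied to the $K_{X_\alpha}$-negative extremal ray generated by lines in the fibers of $q_\alpha$. The only difference is that the paper makes the negativity explicit by restricting $\pi_\alpha^*K_\alpha-\pi_\beta^*K_\beta\sim(r_\beta-r_\alpha)E$ to the fibers of $E\to G/P_J$ and using $\Pic(q_\alpha^{-1}(b))=\ZZ h_\alpha$ to get $K_{X_\alpha}\cdot C=-(r_\beta-r_\alpha)<0$, a computation you gesture at but do not carry out.
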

\begin{proof}
Let $r_{\alpha}$ and $r_{\beta}$ be the codimensions of $i_{\alpha}$ and $i_{\beta}$, respectively. Note that $\dim E= \dim G/P_{I_{\alpha}} +r_{\alpha}-1=\dim G/P_{I_{\beta}} +r_{\beta}-1$.  Then $r_{\alpha}<r_{\beta}$ holds if and only if $\dim\, G/P_{I_{\beta}}< \dim\, G/P_{I_{\alpha}}$. To conclude Lemma \ref{flip}, it is sufficient to show that $r_{\alpha}<r_{\beta}$ and that there is an extremal contraction $\phi_{\alpha}$ of $X_{\alpha}$ onto some $W$ by contracting $G/P_{I_{\alpha}}$ onto $G/P_J$ according to Lemma \ref{kineq}.

To construct the contraction $\phi_{\alpha}$ by the contraction theorem, it is sufficient to show that the extremal ray of $q_{\alpha}$ is $K_{X_{\alpha}}$-negative. Let $C$ be a line in $G/P_{I_{\alpha}}$ that generates the extremal ray of $q_{\alpha}$. By the canonical divisor formula for two blowing-ups $\pi_{\alpha}$ and $\pi_{\beta}$, we have
\begin{align}
 \pi_{\alpha}^*K_{\alpha}-\pi_{\beta}^*K_{\beta}\sim(r_{\beta}-r_{\alpha})E. \label{res2}
 \end{align}
Restricting (\ref{res2}) to the fiber $E_b$ of $E\rightarrow B=G/P_J$ for $b\in B$, we obtain
$$ p_{\alpha}^*K_{\alpha}|_{q_{\alpha}^{-1}(b)}-p_{\beta}^*K_{\beta}|_{q_{\beta}^{-1}(b)}\sim(r_{\beta}-r_{\alpha})(-h_{\alpha}-h_{\beta}).$$
Note that $\Pic(q_{\alpha}^{-1}(b)) = \ZZ h_{\alpha}, \Pic(q_{\beta}^{-1}(b)) = \ZZ h_{\beta}$, and $E$ is of Picard rank 2 generated by $h_{\alpha}$
and $h_{\beta}$. Thus,
$$ K_{\alpha}|_{q_{\alpha}^{-1}(b)}\sim(r_{\beta}-r_{\alpha})(-h_{\alpha})\quad\text{and}\quad K_{\beta}|_{q_{\beta}^{-1}(b)}\sim(r_{\beta}-r_{\alpha})h_{\beta}.$$
Hence, $K_{X_{\alpha}}\cdot C=-(r_{\beta}-r_{\alpha})<0$, and the desired extremal contraction is obtained by contraction theorem. Therefore, $f$ is a simple flip as shown in the following diagram:
%For condition (4), by the formula for the canonical divisor of a blowing-up,
%\[
%K_{Y}=\pi_i^*K_{Y_i}+ (dim\, Y_i-dim\, G/P_{i}-1) E.
%\]
%Therefore
%\[
%\pi_{\alpha}^*K_{Y_{\alpha}}-\pi_{\beta}^*{Y_{\beta}}=(dim\, G/P_{\alpha}-dim\, G/P_{\beta})E.
%\]

\begin{equation}\label{gflip}
 \begin{tikzcd}
&&E=G/P_{I} \arrow[ddll,swap,"p_{\alpha}"] \arrow[d, hook, "j"]\arrow[ddrr,"p_{\beta}"] \\
&&  \arrow[dl,swap, "\pi_{\alpha}"]  X \arrow[dr, "\pi_{\beta}"]\\
   G/P_{I_{\alpha}}\arrow[r,hook, "i_{\alpha}"] \arrow[ddrr, swap, "q_{\alpha}"] & X_{\alpha} \arrow[rr,dashed, "f"]  \arrow[dr, swap, "\phi_{\alpha}"]   & &  \arrow[dl, "\phi_{\beta}"] X_{\beta}& \arrow[l, hook',swap, "i_{\beta}"]  \arrow[ddll, "q_{\beta}"] G/P_{I_{\beta}}.\\
        & & W \\
        && B=G/P_J \arrow[u, hook]
\end{tikzcd}
\end{equation}

\end{proof}

\begin{example}
In Example \ref{C3fl}, it is easy to obtain 
\begin{align}\label{pf}
p_{2*}\shO(h_2+h_3)=N_{\omega}^{\vee}(2h_2)\,\,\,\,\text{and}\,\,\,\, p_{3*}\shO(h_2+h_3)=U_3(2h_3).
\end{align}
Then by Lemma \ref{flip}, we get the flip $g$ (\ref{gflips}) and have the following diagram:
\begin{equation}\label{C3flip}
\begin{tikzcd}
&E=\IFl(2,3,6)  \arrow[dl,swap,"p_2"] \arrow[d, hook, "j"]\arrow[dr,"p_3"]\\
  \IGr(2,6) \arrow[d,hook] & \arrow[dl,swap, "\pi_2"]  X \arrow[dr, "\pi_3"]&\arrow[d, hook',swap] \IGr(3, 6)\\
   X_2 \arrow[rr,dashed, "g"]   &  & X_3.
\end{tikzcd}
\end{equation}
Here $X_2=\PP_{\IGr(2, 6)}(N_{\omega}(-2h_2)\oplus \shO)$ and  $X_3=\PP_{\IGr(3, 6)}(U_3^{\vee}(-2h_3)\oplus \shO).$
\end{example}

\begin{remark}
\begin{enumerate}
\item If $|I|=2$ (in this case $\rho(G/P_I)=2$, and $G/P_J$ is a single point), then $f$ is a \emph{generalized Grassmannian flip}, and the exceptional locus, \emph{i.e.}, the center of the blowing-ups $\pi_{\alpha}$ and $\pi_{\beta}$ are generalized Grassmannians. 
\item Any simple flip of homogeneous type can be viewed as a family of generalized Grassmannian flips over the base $G/P_J$ (see Example \ref{familyC3} below). 
\item It is not hard to see that under \bref{thm:asp}{Condition (A)}, $f$ is a flop iff $\dim\, G/P_{\beta}= \dim\, G/P_{\alpha}$. In this case, we may call it \emph{a simple flop of homogeneous type}, classified by Kanemitsu \cite{kanemitsu2022mukai}.
%\item One always has the diagram (\ref{pregflip}) whenever $X_2$ contains a subvariety $V_2\cong G/P_{i}$ such that the normal bundle $N(V_2, Y_2)\cong E_i^{\vee}$. However, $X_1$ could be non-algebraic and even non-K\"ahler. See Huybrechts \cite{gross2012calabi}  (Example 21.9)
\end{enumerate}
\end{remark}

We can describe the birational map $g$ explicitly. Note that the fiber of $N_{\omega}$ over $V_2\in \IGr(2, 6)$ can be identified as the two-dimensional vector space $V_2^{\perp}/V_2$, where $V_2^{\perp}=\{v\in \CC^6| \omega (V_2, v)=0\}$. The natural projection to the second factor $E^{\vee}_i\oplus \shO\rightarrow \shO$ defines a divisor $E_i^{\infty}\cong \PP(E_i^{\vee})$ of $X_i$ at \enquote{infinity} ($i=2, 3$), and $X_i^{\circ}:=X_i-E_i^{\infty}=\Tot_{\IGr(i, 6)}(E_i^{\vee})$.  Furthermore, $X_2^{\circ}$ can be identified as
\[
\{(V_2, \alpha)|\, V_2\in \IGr(2, 6), \, \alpha\in V_2^{\perp}/V_2 \otimes (\wedge^2 V_2)^{\otimes 2}=\Hom((\wedge^2 V_2^{\vee})^{\otimes 2}, V_2^{\perp}/V_2)\}.
\]
Similarly, $X_3^{\circ}$ can be identified as
\[
\{(V_3, \beta)|\, V_3\in \IGr(3, 6),\, \beta\in \Hom(V_3, (\wedge^3 V_3)^{\otimes 2})\}.
\]

The birational map $g: X_2\dashrightarrow X_3$ or equivalently $X_2^{\circ} \dashrightarrow X_3^{\circ}$ can be described as follows:
\begin{equation*}
\begin{tikzcd}[%
    ,row sep = 0ex
    ,/tikz/column 1/.append style={anchor=base east}
    ,/tikz/column 2/.append style={anchor=base west}
    ]
 X_2^{\circ}\backslash \IGr(2, 6) \arrow[r,"g", "\sim"'] &X_3^{\circ}\backslash \IGr(3, 6)\\
(V_2, \alpha) \arrow[r, maps to] & (\Im \alpha, \, \beta\colon \Im \alpha \to \Im \alpha/V_2).
\end{tikzcd}
\end{equation*}
First note that $V_2\subsetneq \Im \alpha:=V_3 \subsetneq V_2^{\perp}$. Then $\Im \alpha \in \IGr(3, 6)$, and $\beta$ is the natural quotient.
Next, notice that the symplectic form $\omega$ defines an isomorphism $V_3/V_2\otimes V_2^{\perp}/V_3\cong \CC$, and $\alpha$ defines an isomorphism $V_2^{\perp}/V_3\cong (\wedge^2 V_2)^{\otimes 2}$. Thus, $\beta$ can be identified as a non-zero linear map $V_3\rightarrow V_3/V_2\otimes V_3/V_2\otimes (\wedge^2 V_2)^{\otimes 2}\cong (\wedge^3 V_3)^{\otimes 2}$. Finally, $g$ turns out to be birational by constructing its inverse directly:
\[
(V_3, \beta) \longmapsto (\Ker \beta, \alpha: V_3/\Ker \beta  \rightarrow \Ker \beta^{\perp}/\Ker \beta).
\]

\subsection{Classification of Generalized Grassmannian Flips}
\begin{sloppypar}
By means of Tits shadow, one can check \bref{thm:asp}{Condition (A)} for all Dynkin diagrams with two marked nodes case by case so that we get all generalized Grassmannian flips and flops. Let $I=\{\alpha, \beta\}$, and choose the submarkings $\{\alpha\}$ and $\{\beta\}$. Then Table \ref{cgff} lists all those $(\Gamma, I)$'s that satisfy \bref{thm:asp}{Condition (A)}. 
\end{sloppypar}
 \begin{theorem}\label{main2}
 There is a classification of generalized Grassmannian flips and flops via marked Dynkin diagrams as shown in the table below. Here an ordered pair of nodes $(\alpha, \beta)$ represents the birational map $X_{\alpha}\dashrightarrow X_{\beta}$ in (\ref{pregflip}). (See Appendix for labelings of Dynkin diagrams.)
 \end{theorem}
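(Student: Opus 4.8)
The statement to be proven is Theorem \ref{main2}, the classification of generalized Grassmannian flips and flops, so the task is to determine, for every connected or two-component Dynkin diagram $\Gamma$ and every choice of two marked nodes $I=\{\alpha,\beta\}$, whether the pair $(\Gamma,I)$ satisfies \bref{thm:asp}{Condition (A)}, and then to read off from Lemma \ref{flip} whether the resulting birational map $f\colon X_\alpha\dashrightarrow X_\beta$ is a flip (and which direction) or a flop. The plan is therefore a finite case analysis organized by the Tits shadow combinatorics already set up in the excerpt. First I would reduce to $G$ simple: since $I=\{\alpha,\beta\}$ has exactly two elements, if $\Gamma$ is disconnected then $\alpha$ and $\beta$ must lie in distinct components (otherwise one component contains no marked node and splits off a direct factor $G/P_I$ that is a positive-dimensional homogeneous variety unrelated to the flip, contradicting $\rho(G/P_I)=2$ being irreducible data); each component with its single marked node is then a generalized Grassmannian, and the fibers of $p_\alpha$, $p_\beta$ are the full other-component Grassmannian. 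Requiring these to be projective spaces forces each component to be a type-$A$ diagram with an extremal node marked, which gives precisely the row $A_m\times A_n$ with marking $(\alpha_1,\beta_1)$; comparing dimensions $\binom{m+1}{1}-1=m$ versus $n$ yields a flip for $m>n$ and a flop for $m=n$.

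Next I would treat $G$ simple. Here the key computational tool is the Tits shadow description already recalled before Example \ref{C3fl}: the generic fiber of $p_\beta\colon G/P_{\{\alpha,\beta\}}\to G/P_{\{\beta\}}$ is the homogeneous variety whose marked Dynkin diagram is obtained by deleting the node $\beta$ (and its incident edges) from $\Gamma$ and keeping $\alpha$ as the marking. So \bref{thm:asp}{Condition (A)} becomes the purely combinatorial requirement that \emph{both} $\Gamma\setminus\{\beta\}$ marked at $\alpha$ \emph{and} $\Gamma\setminus\{\alpha\}$ marked at $\beta$ be (products of) Dynkin diagrams for which the corresponding generalized Grassmannian is a projective space. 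The generalized Grassmannians that are projective spaces are known: they are exactly $(A_k,\alpha_1)$ and $(A_k,\alpha_k)$ (giving $\PP^k$), and the quadric-free analysis rules everything else out — no $B,C,D,E,F,G$ cominuscule-or-otherwise homogeneous space of Picard number one is a projective space except through these type-$A$ extremal-node cases. Thus for each simple $\Gamma$ I would: (i) list all unordered pairs $\{\alpha,\beta\}$ of nodes; (ii) for each, compute the two one-node-deleted marked subdiagrams; (iii) check whether each is a disjoint union of type-$A$ segments with the surviving marked node at an end of its segment; (iv) when it passes, compute $\dim G/P_{\{\alpha\}}$ and $\dim G/P_{\{\beta\}}$ using Bott's formula for dimensions of flag varieties (number of positive roots not in the Levi), and apply Lemma \ref{flip} to decide flip direction, or declare a flop when the dimensions agree.

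Running this through the classification: in type $A_n$, deleting one marked node must leave the other marked node at the end of a surviving $A$-segment, which happens precisely for the pairs $\{\alpha_i,\alpha_{i+1}\}$ of adjacent nodes; the dimension count $\dim G/P_{\alpha_i}=i(n+1-i)$ shows $\IGr$-style inequality $i(n+1-i)$ vs $(i+1)(n-i)$, giving a flip $(\alpha_{i+1},\alpha_i)$ for $i<n/2$, the mirror for $i>n/2$, and a flop at $i=n/2$ — matching the second row. In type $B_n$ the only pairs passing are $\{\alpha_{n-1},\alpha_n\}$ for all $n$ (the short-root end behaves like an $A_1$), plus the sporadic $\{\alpha_1,\alpha_3\}$ for $B_3$ coming from the $B_3$–$A_3$ triality-type coincidence; in $C_n$ the adjacent pairs $\{\alpha_i,\alpha_{i+1}\}$ survive with the characteristic symplectic dimension shift producing the $\tfrac{2n-1}{3}$ threshold; and in $D_n$ ($n\geq 4$), $E_6,E_7,E_8,F_4,G_2$ one checks directly that no pair of deleted-node subdiagrams is simultaneously type-$A$-with-extremal-marking, so there are no generalized Grassmannian flips, consistent with the last row. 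This is precisely the content tabulated, and the main reference point — that I would lean on rather than re-derive — is \cite{kanemitsu2022mukai}, which already classifies the simple \emph{flops} of homogeneous type (the equal-dimension sub-case); the flips are then obtained by keeping the same combinatorial list but discarding the dimension-balanced entries and recording orientation.

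\textbf{Main obstacle.} The genuine work, and the step most prone to error, is step (iv): correctly computing $\dim G/P_{\{\alpha\}}$ and $\dim G/P_{\{\beta\}}$ (equivalently $r_\alpha,r_\beta$, the fiber dimensions of $p_\alpha,p_\beta$, via $\dim E=\dim G/P_{\{\alpha\}}+r_\alpha-1=\dim G/P_{\{\beta\}}+r_\beta-1$) in the non-simply-laced cases $B_n$ and $C_n$, where the Tits shadow of a deleted short versus long root can be subtle, and in pinning down the sporadic $B_3$ entry. One must also verify that whenever Condition (A) holds the hypothesis $\dim G/P_{I_\beta}<\dim G/P_{I_\alpha}$ of Lemma \ref{flip} can be arranged by orienting the pair, and that the degenerate equality case is genuinely a flop rather than an isomorphism (which follows from $r_\alpha\neq r_\beta$ failing but the two projective bundle structures still being distinct, as already noted in the remark after Lemma \ref{flip}). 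Everything else is bookkeeping against the $\PP^k$-fiber criterion and the tables in \cite{kanemitsu2022mukai}.
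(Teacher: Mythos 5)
Your overall strategy coincides with the paper's: reduce \bref{thm:asp}{Condition (A)} to a Tits-shadow computation on the two one-node-deleted marked subdiagrams, run a finite case check over all Dynkin types, decide flip versus flop by comparing $\dim G/P_{\{\alpha\}}$ with $\dim G/P_{\{\beta\}}$ via Lemma \ref{flip}, and import the flop column from \cite{kanemitsu2022mukai}. However, there is a concrete error in the combinatorial criterion you use in step (iii), and it is not a cosmetic one. You assert that the generalized Grassmannians isomorphic to projective spaces are exactly $(A_k,\alpha_1)$ and $(A_k,\alpha_k)$, and accordingly demand that each deleted-node subdiagram be a union of type-$A$ segments with the surviving mark at an end. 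This omits $(C_k,\alpha_1)\cong\IGr(1,2k)=\PP^{2k-1}$ (every line is isotropic for a symplectic form) and its low-rank avatar $(B_2,\alpha_2)\cong\PP^3$. These are precisely the shadows that produce most of the $C_n$ row and the sporadic $B_3$ entry: for the pair $\{\alpha_i,\alpha_{i+1}\}$ in $C_n$ with $i+1<n$, deleting $\alpha_i$ leaves a $C_{n-i}$ segment marked at its first node, whose associated homogeneous space is $\PP^{2(n-i)-1}$ but which is not a type-$A$ segment; and for $\{\alpha_1,\alpha_3\}$ in $B_3$, deleting $\alpha_1$ leaves $B_2$ marked at the short node, giving $\PP^3$. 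Applied literally, your criterion would reject all of these pairs, contradicting the table you then write down; your appeal to a ``$B_3$--$A_3$ triality-type coincidence'' is not the relevant mechanism. The paper's proof avoids this by computing the shadows explicitly in types $B$ and $C$ rather than invoking a type-$A$-only characterization.

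A secondary, smaller gap: once $(C_k,\alpha_1)$ is admitted as a projective space, your reduction of the disconnected case to $A_m\times A_n$ needs a word of justification (a marked component of type $C_k$ at $\alpha_1$ also passes Condition (A), though it yields the same standard flip of $\PP^m\times\PP^{2k-1}$, so the table's normal form is unaffected). With the corrected list of projective-space shadows --- type-$A$ extremal nodes together with $(C_k,\alpha_1)$ and the low-rank coincidences --- your case analysis and dimension counts do reproduce the paper's table, so the fix is local but necessary.
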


\begin{table}[H]
\begin{center}
\resizebox{\textwidth}{!}{
\begin{tabular}{|c|c|c|}
\hline
Dynkin diagram   &generalized Grassmannian flips & generalized Grassmannian flops\\
\hline
$A_m \times A_n$ $(m>n)$  &$(\alpha_1, \beta_1)$         &None\\
$A_n \times A_n$  &None  &$(\alpha_1, \beta_1)$        \\
$A_n (n\geq 2)$  &$(\alpha_{i+1}, \alpha_{i})$ $(1\leq i<\dfrac{n}{2})$; $(\alpha_i, \alpha_{i+1})$ $(\dfrac{n}{2}<i<n)$ & $(\alpha_1, \alpha_{n})$ for all $n$ and $(\alpha_{\frac{n}{2}}, \alpha_{\frac{n}{2}+1})$ for even $n$\\
$B_n (n\geq 3)$   & $(\alpha_{n-1}, \alpha_{n})$ for all $n$ and $(\alpha_3, \alpha_1)$ for $n=3$ & None \\
$C_n (n\geq 2)$ &  $(\alpha_{i+1}, \alpha_i) (1\leq i<\dfrac{2n-1}{3})$; $(\alpha_{i}, \alpha_{i+1}) (\dfrac{2n-1}{3}<i<n)$& $(\alpha_{\frac{2n-1}{3}}, \alpha_{\frac{2n-1}{3}+1})$ for $n\equiv2 (\operatorname{mod}\, 3)$\\
$D_n (n\geq 4)$ &None & $(\alpha_{n-1}, \alpha_{n})$\\
$E_n (n=6, 7, 8)$ &None & None \\
$F_4$                &None & $(\alpha_2, \alpha_3)$ \\
$G_2$              & None &$(\alpha_1, \alpha_2)$\\
\hline
\end{tabular}}\caption{Classification of generalized Grassmannian flips and flops}\label{cgff}
\end{center}
\end{table}

\begin{proof}
Since the classification of the generalized Grassmannian flops has been obtained by Kanemitsu \cite{kanemitsu2022mukai} (section 5.2 of loc. cit.), we will focus on the generalized Grassmannian flips. 
\begin{itemize}
\item[(1)] For the $A_m \times A_n$ case, when $m>n$, 
$(\alpha_1, \beta_1)$ gives the standard flip, which is very similar to the $A_n\times A_n$ case in Example 5.1 of  Kanemitsu's paper \cite{kanemitsu2022mukai}. The $m>n$ guarantees the inequality of the dimensions in Proposition \ref{flip}. 
\item[(2)]
The $A_n$ case has been studied in \cite{aflip}. Both conditions $i> n/2$ and $i<n/2$ guarantee the inequality of the dimensions in Proposition \ref{flip}, respectively. 
\item[(3)] The $C_n$ case is similar to the $A_n$ case. Example \ref{C3fl} gives a particular example of this type, and the general $C_n$ case shares a similar pattern of Tits shadow. 
\item[(4)] The $D, E, F$ and $G$ cases have no Tits shadows that have two projective spaces simultaneously except for the flop cases (third column listed in the table). 
\item[(5)] For the $B_n$ case,
$$ \dynkin[edge length=1, root radius=.1cm,labels={,, , , \alpha_{n-1},\alpha_n}]{B}{***.*XX}$$ represents the homogeneous variety $Spin(2n+1, \CC)/P_{n, n-1}$. It has two projections onto:
$$ \dynkin[edge length=1, root radius=.1cm,labels={,, , , \alpha_{n-1},}]{B}{***.*X*}$$ and 
$$\dynkin[edge length=1, root radius=.1cm,labels={,,,,,\alpha_n}]{B}{***.**X}$$ with
Tits shadows:
$$ \dynkin[edge length=1, root radius=.1cm,labels={\alpha_n}]{A}{X}\longleftrightarrow \PP^1$$
and 
$$\dynkin[edge length=1, root radius=.1cm,labels={,,,,\alpha_{n-1}}]{A}{***.*X}\longleftrightarrow \PP^{n-1},\, \text{respectively}.$$
For $n\geq 3$, this gives a simple flip by Proposition \ref{flip}.

It is noted that there is a special flip for type $B_3$ by considering
$$ \dynkin[edge length=1, root radius=.1cm,labels={\alpha_1 ,, \alpha_3}]{B}{X*X},$$
whose Tits shadow onto 
$$ \dynkin[edge length=1, root radius=.1cm,labels={,, \alpha_3}]{B}{X**}$$
and 
$$ \dynkin[edge length=1, root radius=.1cm,labels={,, \alpha_3}]{B}{**X}$$
are
$$\dynkin[edge length=1, root radius=.1cm,labels={, \alpha_3}]{B}{*X}\longleftrightarrow  \PP^3$$
 and  
$$\dynkin[edge length=1, root radius=.1cm,labels={\alpha_1 ,}]{A}{X*}\longleftrightarrow  \PP^2, \text{respectively}.$$
It is easy to see that $\dim B_3/P_3>\dim B_1/P_1$ and hence, $(\alpha_3, \alpha_1)$ gives a simple flip by Proposition \ref{flip}. 
\end{itemize}

\end{proof}

\begin{example}[(A family of flips (\ref{C3flip}))]\label{familyC3}
Choose the marking $I=\{\alpha_1, \alpha_2, \alpha_3\}$ and the sub-markings $I_1=\{\alpha_1, \alpha_2\}$ and $I_2=\{\alpha_1, \alpha_3\}$:
$$F_4: \dynkin[edge length=1, root radius=.1cm,labels={\alpha_1,\alpha_2, \alpha_3, \alpha_4}]{F}{XXX*}.$$
The corresponding simple flip is exactly a family of flips (\ref{C3flip}), \emph{i.e.}, a family of  
$$ C_3:  \dynkin[edge length=1, root radius=.1cm,labels={\alpha_4, \alpha_3,\alpha_2}]{C}{*XX} $$
over the base $B=F_4/P_1$. 
\end{example}

\subsection{DK Conjecture for Simple Flips of Homogeneous Type} 
\begin{conjecture}\label{conjgflip}
In the circumstance of Proposition \ref{flip}, there exists a fully-faithful functor $\Phi$ of triangulated categories
\begin{equation*}
\begin{tikzcd}
 \Phi: D(X_{\beta}) \arrow[r,hook]   & D(X_{\alpha}).
\end{tikzcd}
 \end{equation*}
 %Moreover, $\Phi$ can be represented by a kernel functor $\Phi_K$ such that $K\in D(X_{\beta}\times_W X_{\alpha})$.
\end{conjecture}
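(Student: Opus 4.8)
The plan is to carry out, in the generality of diagram~\eqref{gflip}, the same argument that proves Theorem~\ref{thm}. First I would record that $\pi_\alpha$ and $\pi_\beta$ exhibit $X$ simultaneously as $Bl_{G/P_{I_\alpha}}X_\alpha$, with center of codimension $r_\alpha$, and as $Bl_{G/P_{I_\beta}}X_\beta$, with center of codimension $r_\beta$, and that $r_\beta>r_\alpha$ by Lemma~\ref{flip}. Orlov's blow-up formula then produces two semiorthogonal decompositions of $D(X)$: one with $r_\alpha-1$ consecutive copies of $D(G/P_{I_\alpha})$ and one with $r_\beta-1$ consecutive copies of $D(G/P_{I_\beta})$, successive copies twisted by powers of $\shO_E(E)=\shO(-p_\alpha^{*}h_\alpha-p_\beta^{*}h_\beta)$, exactly as in \eqref{sod1}--\eqref{sod2}. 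Since $r_\beta>r_\alpha$, the decomposition coming from $X_\beta$ carries $r_\beta-r_\alpha$ more exceptional pieces than the one coming from $X_\alpha$.

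Next, the naive candidate $F=\bR\pi_{\alpha*}\circ\pi_\beta^{*}\colon D(X_\beta)\to D(X)\to D(X_\alpha)$ is not fully faithful because $g$ is not toric, so I would mutate. Using Kuznetsov's mutation functors \cite{kuznetsov2010derived} I would move the $D(G/P_{I_\alpha})$-copies to the left through $D(X_\alpha)$, and symmetrically the $D(G/P_{I_\beta})$-copies, obtaining $D(X)=\langle\shD_\alpha,{}^{\perp}\shD_\alpha\rangle$ and $D(X)=\langle\shD_\beta,{}^{\perp}\shD_\beta\rangle$ with $\shD_i\cong D(X_i)$. The mutations have to be tuned so that the mutated exceptional pieces from $G/P_{I_\alpha}$ and from $G/P_{I_\beta}$ are stacked compatibly inside $D(X)$ --- in particular so that, after the appropriate $\shO_E(E)$-twists, the $r_\beta-r_\alpha$ ``extra'' $X_\beta$-pieces coincide with pieces on the $X_\alpha$ side --- so that ${}^{\perp}\shD_\alpha$ and ${}^{\perp}\shD_\beta$ share as many objects as possible. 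Determining which twists achieve this reduces, via Orlov's projective-bundle formula on $X_\alpha$ and $X_\beta$ and the restriction of homogeneous bundles to $E=G/P_I$, to Borel--Weil--Bott on the homogeneous fibers $\PP^{r_\alpha-1}$ and $\PP^{r_\beta-1}$ of $p_\alpha$ and $p_\beta$.

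Then I would check that the resulting functor $\Phi=\phi\colon\shD_\beta\hookrightarrow D(X)\to\shD_\alpha$ is fully faithful by Thomas' chess game method \cite{thomas2018notes}: arrange the two generating collections of $D(X)$ along the two axes of a board and verify, square by square, the $\Ext$-vanishings and adjunction identities that force $\phi$ to preserve all $\Hom$-spaces. Each square amounts to computing the cohomology of an explicit homogeneous vector bundle on one of $G/P_I$, $G/P_{I_\alpha}$, $G/P_{I_\beta}$, or on a projective bundle over one of them --- again a finite Borel--Weil--Bott computation --- and this yields the embedding $D(X_\beta)\hookrightarrow D(X_\alpha)$.

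The hard part will be that, unlike the toric flips treated by Kawamata, there is no type-independent reason for the chess game to close up with all the needed vanishings: both the number $r_\beta-r_\alpha$ of extra pieces and the shape of the Borel--Weil--Bott inputs change with the entry of Table~\ref{cgff}. I therefore expect the general conjecture to have to be established case by case along that table --- the entry $(C_3,\alpha_2,\alpha_3)$ being exactly Theorem~\ref{thm} --- while a uniform argument would seem to require either a conceptual description of the Fourier--Mukai kernel of the mutated functor $\phi$ (for instance as $\shO_X$ twisted by a specific complex supported on $E$), or a general vanishing statement for the homogeneous bundles that occur. A subsidiary difficulty is that even organizing the mutations uniformly presupposes that the partial-flag bundle geometry of $X_\alpha$ and $X_\beta$ interacts with $G/P_J$ in the expected way, which holds in every known example but lacks a general published formulation.
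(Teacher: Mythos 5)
This statement is a conjecture that the paper itself does not prove: the paper only establishes the single instance $(C_3,\alpha_2,\alpha_3)$ (Theorem~\ref{thm}) and otherwise just sketches a strategy. Your proposal reproduces that strategy essentially verbatim --- Orlov's blow-up formula, Kuznetsov mutations to align ${}^{\perp}\shD_\alpha$ and ${}^{\perp}\shD_\beta$, and Thomas' chess game --- and you correctly identify the same obstructions the authors do (the lack of known full exceptional collections on general $G/P_\alpha$ and the case-by-case Borel--Weil--Bott verifications), so you have matched the paper's approach but, like the paper, have not proved the general statement.
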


\subsection*{Strategy for proving Conjecture \ref{conjgflip}}
By Orlov's blow-up formula \cite{orlov1992projective}, we have the following two semi-orthogonal decompositions (SOD) of $D(Y)$:
\begin{align}
&D(Y)=\langle \pi_{\alpha}^*D(Y_{\alpha}), j_*p_{\alpha}^*D(G/P_{\alpha}), \cdots, j_*p_{\alpha}^*D(G/P_{\alpha})\otimes\shO((r_{\alpha}-2)h_{\beta})\rangle; \label{SOD1'}\\
&D(Y)=\langle \pi_{\beta}^*D(Y_{\beta}), j_*p_{\beta}^*D(G/P_{\beta}), \cdots,  j_*p_{\beta}^*D(G/P_{\beta})\otimes\shO((r_{\beta}-2)h_{\alpha})\rangle,\label{SOD2'}
\end{align}
where $r_i=\rank E_i= \rank p_{i*}(\shO(h_{\alpha}+h_{\beta}))$ for $i=\alpha, \beta$.

In principle, we can compare $D(Y_{\alpha})$ and $D(Y_{\beta})$ once both $D(G/P_{\alpha})$ and $D(G/P_{\beta})$ are known in detail. If both $D(G/P_{\alpha})$ and $D(G/P_{\beta})$ admit full exceptional collections, we can apply the mutation techniques of Kuzenetsov (see \cite{kuznetsov2010derived}) to simplify SOD (\ref{SOD1'}) and (\ref{SOD2'}) properly and finally conclude the derived embedding by chess game method. See \cite{bondal1989representation} or the Appendix of \cite{aflip} for details of full exceptional collections, SODs and mutations.

However, it is still open whether $D(G/P_{\alpha})$ admits full exceptional collections in general. Refer to Kapranov \cite{kapranov1988derived}, Kuznetsov  \cite{kuznetsov2008exceptional},  \cite{kuznetsov2016exceptional}, Fonarev \cite{fonarev2019full} and others for constructions of full exceptional collections for certain generalized Grassmannians.

\begin{remark}
\begin{enumerate}
\item  In \cite{aflip}, we proved Conjecture \ref{conjgflip} for the first two nodes $(\alpha_2, \alpha_1)$ of $A_n$ diagram. The case for the first two nodes $(\alpha_2, \alpha_1)$ of $C_n$ diagram can also be proved by modifying the proof in \cite{aflip}. The case for $B_3$ type simple flips will be proved in \cite{bflip} .

%\item It is noted that if Conjecture \ref{conjgflip} can be proved by the above strategy, then it also holds for generalized Grassmannian flips in the sense of Remark 2.3 (\ref{rmk2}). 
\item The embedding result holds as long as the flip behaves like $g$ locally. In other words, if a flip $X'_2 \dashrightarrow X'_3$ satisfies the condition that $ X'_2$ contains $\IGr(2, 6)$ whose normal bundle $N(\IGr(2, 6), X'_2)\cong N_{\omega}(-2h_2)$, then there is an embedding from $D(X'_3)$ into $D(X_2')$
\item By a theorem of Kuznetsov on Hochschild homology (Theorem 7.3 in \cite{kuznetsov2009hochschild}), there is an embedding of vector spaces $HH_{\bullet}(X_3)\hookrightarrow HH_{\bullet}(X_2)$. By the HKR theorem, one immediately gets an embedding (not necessarily graded) on the total cohomology $H(X_3)\hookrightarrow H(X_2)$.
\end{enumerate}
\end{remark}

\section{Computation of mutations}
%This section together with next section will give the proof of Theorem \ref{thm}.
Let us write down SOD (\ref{SOD1'}) and SOD (\ref{SOD2'}) for our generalized Grassmannian flip for $Sp(6, \CC)$ (\ref{C3flip}):
\begin{align}
&D(X)=\langle \pi_2^*D(X_2), j_*p_2^*D(\IGr(2, 6))\rangle; \label{SOD1}\\
&D(X)=\langle \pi_3^*D(X_3), j_*p_3^*D(\IGr(3, 6)), j_*(p_3^*D(\IGr(3, 6))\otimes\shO(h_2))\rangle. \label{SOD2}
\end{align}

Both $\IGr(2,6)$ and $\IGr(3,6)$ admit full exceptional collections:
\begin{lemma}[(Kuznetsov \cite{kuznetsov2008exceptional} and \cite{kuznetsov2006hyperplane})]\label{SG}
\begin{align}
& D(\IGr(2, 6))=\langle \shA'(-2h_2), \shA'(-h_2), \shA, \shA(h_2), \shA'(2h_2)\rangle,\\
& D(\IGr(3, 6))=\langle \shB(-2h_3), \shB(-h_3), \shB, \shB(h_3)\rangle,
\end{align}
where $\shA=\langle S^2 U_2, U_2, \shO\rangle$, $\shA'=\langle U_2, \shO\rangle$ and $\shB=\langle U_3, \shO\rangle$.
\end{lemma}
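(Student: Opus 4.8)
Since the lemma is quoted from Kuznetsov, the plan is not to reprove it but to indicate how each full exceptional collection arises and to isolate the one step that is not formal. In both cases the verification splits into (a) a finite list of cohomology vanishings for homogeneous bundles, which is pure Borel--Weil--Bott, and (b) a generation (fullness) statement, which carries the real content and genuinely needs a resolution of the diagonal or a homological projective duality input; semiorthogonality together with a count of $K_0$ is not by itself enough.

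First, for $\IGr(2, 6)$, I would use that it is a smooth hyperplane section of the ordinary Grassmannian. The symplectic form $\omega\in\wedge^2(\CC^6)^{\vee}$ restricts on $\Gr(2,6)$ to a section of $\wedge^2 U_2^{\vee}=\det U_2^{\vee}=\shO(h_2)$, whose zero locus is exactly $\IGr(2,6)$, a divisor in the Pl\"ucker polarization (codimension $1=\rank\shO(h_2)$). Starting from Kapranov's full exceptional collection on $\Gr(2,6)$, reorganized into a Lefschetz collection with respect to $\shO(h_2)$ whose blocks are twists of $\shA=\langle S^2U_2,U_2,\shO\rangle$ and of its subcategory $\shA'=\langle U_2,\shO\rangle$, Kuznetsov's theorem on derived categories of hyperplane sections (\cite{kuznetsov2006hyperplane}; the isotropic-line case is treated in \cite{kuznetsov2008exceptional}) produces a full exceptional collection on $\IGr(2,6)$ out of this datum --- here one uses that $[\omega]$ lies off the Pfaffian cubic projective dual of $\Gr(2,6)$, so that $\IGr(2,6)$ is smooth and no interesting piece of the homological-projective-dual category intervenes. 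Identifying the result with $\langle\shA'(-2h_2),\shA'(-h_2),\shA,\shA(h_2),\shA'(2h_2)\rangle$ is then bookkeeping, and the length $2+2+3+3+2=12$ matches $\rank K_0(\IGr(2,6))$, confirming nothing is lost.

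Next, for $\IGr(3, 6)=\LGr(3, 6)$, the Lagrangian Grassmannian, I would either quote Kuznetsov's collection directly, or reconstruct it by realizing $\LGr(3,6)$ as the zero locus in $\Gr(3,6)$ of a regular section of the globally generated rank-$3$ bundle $\wedge^2 U_3^{\vee}\cong U_3\otimes\det U_3^{\vee}=U_3(h_3)$ (codimension $3=\rank$), resolving its structure sheaf by the Koszul complex, and twisting down a Lefschetz collection of $\Gr(3,6)$. One then checks that the four blocks $\shB(k h_3)$, $k=-2,-1,0,1$, with $\shB=\langle U_3,\shO\rangle$, are exceptional and mutually semiorthogonal; all the $\Ext$ vanishings unwind to the vanishing of cohomology of homogeneous bundles of the shape $(U_3\otimes U_3^{\vee})(-kh_3)$, $U_3(-kh_3)$, $U_3^{\vee}(-kh_3)$ and $\shO(-kh_3)$ for small $k$, all settled by Borel--Weil--Bott (using that the Fano index of $\LGr(3,6)$ is $4$). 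The length here is $2\cdot 4=8=\rank K_0(\LGr(3,6))$.

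The hard part in either case is fullness, i.e.\ that the listed objects split-generate $D(\IGr(i, 6))$: semiorthogonality plus the matching $K_0$ count does not rule out a phantom complement. For $\IGr(2, 6)$ this is precisely the output of the hyperplane-section machinery, resting on the homological projective duality of $(\Gr(2,6),\shO(h_2))$ and on control of its dual; for $\LGr(3, 6)$ one invokes Kuznetsov's explicit resolution of the diagonal in terms of the bundles $\shB(k h_3)$, equivalently that the Koszul argument above can be arranged to resolve $\shO_{\Delta}$. Since both facts are in the literature, the proof ultimately amounts to quoting \cite{kuznetsov2008exceptional} and \cite{kuznetsov2006hyperplane} and checking that the bundles written $\shA$, $\shA'$, $\shB$ there coincide with the ones above.
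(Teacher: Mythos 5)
Your proposal is correct and matches the paper's treatment: the paper offers no proof of Lemma \ref{SG} beyond citing \cite{kuznetsov2008exceptional} for $\IGr(2,6)$ and \cite{kuznetsov2006hyperplane} for $\LGr(3,6)$, which is exactly what you do, and your supporting sketch (hyperplane section of $\Gr(2,6)$ off the Pfaffian cubic, zero locus of a section of $\wedge^2U_3^{\vee}\cong U_3(h_3)$ in $\Gr(3,6)$, the $K_0$ ranks $12$ and $8$, and the caveat that fullness needs more than a $K_0$ count) is accurate. No gaps.
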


For simplicity, we omit the functors $j_*$, $p_2^*$ and $p_3^*$. All relevant sheaves (of complexes) in the following two lemmas live on $X$ by applying pullback to the exceptional set $E$ first and then pushforward to $X$ if not otherwise specified. 
\begin{lemma}\label{orth}
On $X$, we have the following:
\begin{enumerate}
\item $\Ext^{\bullet}(\shO(2h_2), U_2(h_3))=0;$
\item $\Ext^{\bullet}(U_2(2h_2), U_2(h_3))=0;$
\item  $\Ext^{\bullet}(\shO(2h_2), \shO(h_3))=0.$
\end{enumerate}
  \end{lemma}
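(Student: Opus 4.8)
The plan is to reduce each of the three $\Ext$‑vanishings on $X$ to a pair of cohomology vanishings on the exceptional divisor $E=\IFl(2,3,6)$, and then to evaluate the latter through one of the two projective bundle structures $p_2,p_3$ by Bott's theorem.

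First I would reduce to $E$. Since $E\subset X$ is a smooth divisor with $N_{E/X}=\shO_E(E)$, for sheaves $\mathcal F,\mathcal G$ on $E$ the $(j^{*},j_{*})$‑adjunction together with the Koszul resolution $0\to\shO_X(-E)\to\shO_X\to j_{*}\shO_E\to0$ yield a long exact sequence
\[
\cdots\to\Ext^{k}_{E}(\mathcal F,\mathcal G)\to\Ext^{k}_{X}(j_{*}\mathcal F,j_{*}\mathcal G)\to\Ext^{k-1}_{E}(\mathcal F,\mathcal G\otimes\shO_E(E))\to\Ext^{k+1}_{E}(\mathcal F,\mathcal G)\to\cdots .
\]
So it suffices that $\Ext^\bullet_E(\mathcal F,\mathcal G)$ and $\Ext^\bullet_E(\mathcal F,\mathcal G\otimes\shO_E(E))$ both vanish. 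By the construction of $X$, $\shO_E(E)=\shO(\xi)$ is the common relative tautological bundle of $p_2,p_3$, i.e.\ $\shO(-h_2-h_3)$. Thus statement (1) becomes $H^\bullet(E,U_2(-2h_2+h_3))=0$ and $H^\bullet(E,U_2(-3h_2))=0$; statement (3) is the same with $U_2$ replaced by $\shO$; and, using the trace splitting $U_2^\vee\otimes U_2=\shO\oplus S^2U_2(h_2)$ in characteristic zero, statement (2) becomes the vanishing of $H^\bullet(E,\mathcal E(-2h_2+h_3))$ and $H^\bullet(E,\mathcal E(-3h_2))$ for $\mathcal E\in\{\shO,\,S^2U_2(h_2)\}$.

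Next I would compute these groups by pushing down whichever of $p_2\colon E\to\IGr(2,6)$, $p_3\colon E\to\IGr(3,6)$ turns the sheaf into a twist of a pullback. From (\ref{pf}) one reads off $\shO(h_3)=\shO_{p_2}(1)\otimes p_2^{*}\shO(h_2)$ and $\shO(h_2)=\shO_{p_3}(1)\otimes p_3^{*}\shO(h_3)$ (relative hyperplane bundles), and one has the relative tautological sequence $0\to p_2^{*}U_2\to p_3^{*}U_3\to\shO_{p_3}(1)\to0$ realizing $E$ as the relative Grassmannian $\Gr(2,U_3)$ over $\IGr(3,6)$; in particular $p_2^{*}U_2$ restricts to $\Omega^1_{\PP^2}(1)$ on each $\PP^2$‑fibre of $p_3$. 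With these and the standard pushforward formulas for the $\PP^1$‑bundle $p_2$ and $\PP^2$‑bundle $p_3$: the groups carrying a $-2h_2$ (resp.\ $-h_2$) twist vanish because $Rp_{3*}$ annihilates $\shO_{p_3}(-2)$ (resp.\ $\shO_{p_3}(-1)$) — in the one $U_2$‑twisted instance this is the fibrewise $H^\bullet(\PP^2,\Omega^1_{\PP^2}(-1))=0$; the groups carrying a $-3h_2$ twist collapse, via relative Serre duality along $p_3$, to a single term $H^{\bullet-2}(\IGr(3,6),U_3(-2h_3))$ or $H^{\bullet-2}(\IGr(3,6),\shO(-2h_3))$, which vanishes since it is a semiorthogonality relation inside the collection of Lemma \ref{SG}; and the $S^2U_2$‑twisted groups, pushed to $\IGr(2,6)$, become $H^\bullet(\IGr(2,6),S^2U_2(-2h_2))$ and $H^\bullet(\IGr(2,6),S^2U_2\otimes N_\omega^\vee)$, the first immediate from Bott's theorem via the Koszul resolution for the hyperplane section $\IGr(2,6)\subset\Gr(2,6)$, the second additionally requiring the defining sequence $0\to N_\omega\to\underline{\CC^6}/U_2\to U_2^\vee\to0$ to reduce to completely reducible homogeneous bundles on $\Gr(2,6)$.

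The hard part will be purely bookkeeping rather than conceptual: tracking the three line‑bundle classes $h_2$, $h_3$ and the two relative $\shO(1)$'s through the adjunction and the pushforwards, identifying the restrictions of $p_2^{*}U_2$ and $p_2^{*}S^2U_2$ to the $\PP^2$‑fibres, and running Bott's algorithm — in practice, checking that a $\rho$‑shifted weight is singular — for each irreducible homogeneous bundle that appears. The one genuine subtlety is that $N_\omega$ does not extend to $\Gr(2,6)$, so every term involving $N_\omega$ must first be broken up by its defining sequence; this is also what forces the Koszul resolutions along $\IGr(2,6)\subset\Gr(2,6)$ and $\LGr(3,6)\subset\Gr(3,6)$. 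These are computations of the same nature as those carried out in \cite{aflip}.
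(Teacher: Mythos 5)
Your reduction to the exceptional divisor is exactly the paper's: the same attaching triangle for $j\colon E\hookrightarrow X$ produces the same long exact sequence, and the same identification $\shO_E(E)=\shO(-h_2-h_3)$ turns each statement into the vanishing of the same pair of cohomology groups on $E$. Where you diverge is in evaluating those groups. The paper pushes everything down along $p_2$ to $\IGr(2,6)$, rewrites $p_{2*}\shO(h_3)=N_{\omega}^{\vee}(h_2)$ via (\ref{pf}), breaks up $N_{\omega}^{\vee}$ by the sequence (\ref{eq:1}), and quotes the semiorthogonality of Lemma \ref{SG} (with Borel--Weil--Bott relegated to a footnote for the terms, such as $S^2U_2\otimes U_2$, that the exceptional collection does not literally cover). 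You instead exploit the $\PP^2$-bundle $p_3$ for the groups twisted by $-2h_2$ and $-3h_2$, which is a legitimate and arguably cleaner route: $Rp_{3*}\shO_{p_3}(-1)=Rp_{3*}\shO_{p_3}(-2)=0$ together with relative Serre duality dispatches those terms immediately, at the cost of leaning more heavily on Bott's algorithm elsewhere. One caution: your parenthetical that the groups carrying a net $-h_2$ twist die because $Rp_{3*}$ kills $\shO_{p_3}(-1)$ does not apply to $S^2U_2(-h_2+h_3)$, since $S^2U_2$ is not a $p_3$-pullback and the fibrewise group is $H^{\bullet}(\PP^2,S^2\Omega^1_{\PP^2}(1))$, which is \emph{not} zero (it is $\CC^3$ in degree $1$); that term must instead go through your $p_2$-route, where it becomes $H^{\bullet}(\IGr(2,6),S^2U_2\otimes N_{\omega}^{\vee})$ and does vanish, as you also propose. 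With that one justification corrected, your argument is sound and matches the paper's in all essential respects.
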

 \begin{proof}
 \begin{enumerate}
 \item The attaching distinguished triangle associated to $j: \,E\hookrightarrow X$ is given by
\begin{align}
  -\otimes \shO_E(-E)[1]=\shO_E(h_2+h_3)[1]\longrightarrow j^*j_* \longrightarrow id \xrightarrow{[1]}. \label{att}
\end{align}
Applying it to $\shO(2h_2)$ on $E$, we have a distinguished triangle in $D(E)$: 
\begin{align}
  \shO(2h_2)\otimes \shO_E(-E) [1]\longrightarrow j^*j_* \shO(2h_2) \longrightarrow \shO(2h_2) \xrightarrow{[1]}. \label{att1}
\end{align}
\begin{sloppypar}
Now a long exact sequence arises by applying the derived functor $\Ext^{\bullet}(-, U_2(h_3))$ to the distinguished triangle (\ref{att1}): 
\begin{align*}
  \cdots&\rightarrow \Ext^{\bullet}_E(\shO(2h_2), U_2(h_3))\rightarrow \Ext^{\bullet}_E(j^*j_*\shO(2h_2), U_2(h_3)) \\
  &  \rightarrow \Ext^{\bullet-1}_E(\shO(2h_2)\otimes \shO_E(-E), U_2(h_3)) \rightarrow \cdots.
\end{align*}
\end{sloppypar}
Recall that $\shO_E(-E)=\shO_E(h_2+h_3)$ and by adjoint pairs ($j^*, j_*$), we have
\[
  \cdots\rightarrow \Ext^{\bullet}_E(\shO(2h_2), U_2(h_3))\rightarrow \Ext^{\bullet}(\shO(2h_2), U_2(h_3)) \rightarrow  \Ext^{\bullet-1}_E(\shO(3h_2), U_2)\rightarrow   \cdots.
\]
The last term is easy to evaluate: $$\Ext^{\bullet-1}_E(\shO(3h_2), U_2)=\Ext^{\bullet-1}_{\IGr(2, 6)}(\shO(3h_2), U_2)=0$$ by SOD of $D(\IGr(2,6))$ (Lemma \ref{SG}).
For the first term, we use the projection formula (\ref{pf}):
$$\Ext^{\bullet}_E(\shO(2h_2), U_2(h_3))=\Ext^{\bullet}_{\IGr(2, 6)}(U_2^{\vee}(h_2), N_{\omega}^{\vee}).$$
The definition of $N_{\omega}$ gives a short exact sequence on $\IGr(2, 6)$:
\begin{equation}\label{eq:1}
\begin{tikzcd}
0\arrow[r] & U_2 \arrow[r] & Q_4^{\vee} \arrow[r] & N_{\omega}^{\vee} \arrow[r] &0.
\end{tikzcd}
\end{equation}
Here $Q_4=\underline{\CC^6}/U_2$ is the tautological quotient bundle on $\IGr(2, 6)$.   
Therefore, 
$$\Ext^{\bullet}_{\IGr(2, 6)}(U_2^{\vee}(h_2), N_{\omega}^{\vee})=0$$ holds as long as both 
$$\Ext^{\bullet}_{\IGr(2, 6)}(U_2^{\vee}(h_2), U_2)=0\quad  \text{and} \quad \Ext^{\bullet}_{\IGr(2, 6)}(U_2^{\vee}(h_2), Q_4^{\vee})=0 \quad \text{hold}.$$ 
Note that $U_2^{\vee}=U_2(h_2)$. We have 
$$\Ext^{\bullet}_{\IGr(2, 6)}(U_2^{\vee}(h_2), U_2)=\Ext^{\bullet}_{\IGr(2, 6)}(U_2(2h_2), U_2)=0$$
by SOD of $D(\IGr(2,6))$ (Lemma \ref{SG}). Similarly, by SOD of $D(\IGr(2,6))$, we have 
$$\Ext^{\bullet}_{\IGr(2, 6)}(U_2^{\vee}(h_2), U_2^{\vee})=0,\quad \Ext^{\bullet}_{\IGr(2, 6)}(U_2^{\vee}(h_2), \shO)=0,$$
and thus, $\Ext^{\bullet}_{\IGr(2, 6)}(U_2^{\vee}(h_2), Q_4^{\vee})=0.$\footnote{One can also apply the Borel-Weil-Bott theorem for $Sp(6, \CC)$ directly.}
Hence, the middle term $$\Ext^{\bullet}(\shO(2h_2), U_2(h_3))=0.$$

\item Similarly, we have the following long exact sequence by the attaching distinguished triangle (\ref{att}):
\begin{align*}
 \cdots&\rightarrow  \Ext^{\bullet}_E(U_2(2h_2), U_2(h_3))\rightarrow  \Ext^{\bullet}(U_2(2h_2), U_2(h_3)) \\
 &\rightarrow \Ext^{\bullet-1}_E(U_2(3h_2), U_2) \rightarrow \cdots.
\end{align*}
The last term vanishes because
$$\Ext^{\bullet-1}_E(U_2(3h_2), U_2)=\Ext^{\bullet-1}_{\IGr(2, 6)}(U_2(3h_2), U_2)=0$$ by SOD of $D(\IGr(2,6))$ (Lemma \ref{SG}).
By applying the projection formula (\ref{pf}) to the first term, we have
\begin{align*}
\Ext^{\bullet}_E(U_2(2h_2), U_2(h_3))&=\Ext^{\bullet}_{\IGr(2, 6)}(U_2^{\vee}\otimes U_2(h_2), N_{\omega}^{\vee})\\
&=\Ext^{\bullet}_{\IGr(2, 6)}(\shO(h_2)\oplus S^2U_2^{\vee}, N_{\omega}^{\vee}).
\end{align*}
Making use of the short exact sequence (\ref{eq:1}) and SOD of $D(\IGr(2, 6))$ (Lemma \ref{SG}) again, we conclude that $$\Ext^{\bullet}(U_2(2h_2), U_2(h_3))=0.$$

\item We consider $\Ext^{\bullet}_E(\shO(3h_2), \shO)$ and $\Ext^{\bullet}_E(\shO(2h_2), \shO(h_3))$. The former one
 $$\Ext^{\bullet}_E(\shO(3h_2), \shO)=\Ext^{\bullet}_{\IGr(2, 6)}(\shO(3h_2), \shO)=0$$ by SOD of $D(\IGr(2, 6))$ (Lemma \ref{SG}).
The latter one
$$ \Ext^{\bullet}_E(\shO(2h_2),\shO(h_3))=\Ext^{\bullet}_{\IGr(2,6)}(\shO(h_2), N_{\omega}^{\vee})=0$$
by the same argument as before. Combining with the following exact sequence:
\[
 \cdots \rightarrow \Ext^{\bullet}_E(\shO(2h_2), \shO(h_3)) \rightarrow  \Ext^{\bullet}(\shO(2h_2), \shO(h_3))\rightarrow \Ext^{\bullet-1}_E(\shO(3h_2), \shO)\rightarrow \cdots, 
\]
we have $$\Ext^{\bullet}(\shO(2h_2), \shO(h_3))=0.$$
 \end{enumerate}
 \end{proof}

\begin{lemma}\label{mut}
On $X$, we have the following mutations:
 \begin{enumerate}
\item $\LL_{\shO(h_2)} U_2(h_3)=U_3(h_3);$
 \item $\RR_{\shO(h_3)}U_2(2h_2)=U_3^{\vee}(h_2).$
 \end{enumerate}
 \end{lemma}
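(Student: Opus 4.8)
The plan is to deduce both identities from one geometric input — the tautological short exact sequence on the exceptional divisor $E=\IFl(2,3,6)$ — together with the semiorthogonality of SOD (\ref{SOD2}). On $E$ the universal flag gives an inclusion of pullback bundles $U_2\hookrightarrow U_3$ with invertible quotient, and the twist of this quotient is determined by combining the identity $\shO(\xi_2)=\shO(-h_2-h_3)$ with $E_2=p_{2*}\shO(h_2+h_3)=N_{\omega}^{\vee}(2h_2)$ from (\ref{pf}): the relative tautological sub-line-bundle $\shO(\xi_2)\hookrightarrow p_2^{*}E_2^{\vee}=N_{\omega}\otimes\shO(-2h_2)$ is exactly $(U_3/U_2)\otimes\shO(-2h_2)$, so (suppressing $p_i^{*}$ and $j_{*}$ as in the conventions)
\begin{equation}\label{eq:taut}
0\longrightarrow U_2\longrightarrow U_3\longrightarrow\shO(h_2-h_3)\longrightarrow 0\qquad\text{on }E.
\end{equation}
Since $U_2$ has rank $2$ with $\det U_2=\shO(-h_2)$, one has $U_2^{\vee}\cong U_2(h_2)$; dualizing (\ref{eq:taut}) and twisting by $\shO(h_2)$ gives
\begin{equation}\label{eq:tautdual}
0\longrightarrow\shO(h_3)\longrightarrow U_3^{\vee}(h_2)\longrightarrow U_2(2h_2)\longrightarrow 0\qquad\text{on }E.
\end{equation}
Pushing forward by $j_{*}$ turns (\ref{eq:taut}) twisted by $\shO(h_3)$, and (\ref{eq:tautdual}), into distinguished triangles in $D(X)$.

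For (1), I would apply the left mutation $\LL_{\shO(h_2)}$ to the triangle $U_2(h_3)\to U_3(h_3)\to\shO(h_2)\xrightarrow{[1]}$ coming from (\ref{eq:taut}). The object $\shO(h_2)$ is exceptional in $D(X)$ — it is the image of $\shO_{\IGr(3,6)}$ under the fully faithful functor realizing the last block of SOD (\ref{SOD2}) — so $\LL_{\shO(h_2)}\shO(h_2)=0$, and applying the triangulated functor $\LL_{\shO(h_2)}$ yields $\LL_{\shO(h_2)}U_2(h_3)\cong\LL_{\shO(h_2)}U_3(h_3)$. Now $U_3(h_3)=j_{*}p_3^{*}(U_3(h_3))$ lies in the middle block $j_{*}p_3^{*}D(\IGr(3,6))$ of SOD (\ref{SOD2}) whereas $\shO(h_2)$ lies in the last block, so semiorthogonality forces $\RR\Hom_X(\shO(h_2),U_3(h_3))=0$ and hence $\LL_{\shO(h_2)}U_3(h_3)=U_3(h_3)$, proving (1). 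Part (2) is entirely parallel: apply the right mutation $\RR_{\shO(h_3)}$ to the triangle $\shO(h_3)\to U_3^{\vee}(h_2)\to U_2(2h_2)\xrightarrow{[1]}$ from (\ref{eq:tautdual}); since $\shO(h_3)$ is exceptional (the image of $\shO_{\IGr(3,6)}(h_3)$ under the functor realizing the middle block of SOD (\ref{SOD2})) one gets $\RR_{\shO(h_3)}\shO(h_3)=0$, hence $\RR_{\shO(h_3)}U_2(2h_2)\cong\RR_{\shO(h_3)}U_3^{\vee}(h_2)$; and $U_3^{\vee}(h_2)$ lies in the last block of SOD (\ref{SOD2}) while $\shO(h_3)$ lies in the middle one, so $\RR\Hom_X(U_3^{\vee}(h_2),\shO(h_3))=0$ and $\RR_{\shO(h_3)}U_3^{\vee}(h_2)=U_3^{\vee}(h_2)$, proving (2).

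The only place where genuine care is needed is the first paragraph: fixing the precise twist in (\ref{eq:taut}) and verifying that $\shO(h_2)$, $U_3(h_3)$, $U_3^{\vee}(h_2)$, $\shO(h_3)$ really fall into the blocks of SOD (\ref{SOD2}) claimed above, since a sign slip there would derail everything. I would pin this down from the already-established formulas $\shO(\xi_2)=\shO(-h_2-h_3)$ and $E_2=N_{\omega}^{\vee}(2h_2)$, cross-checked against first Chern classes ($c_1(U_2)=-h_2$ and $c_1(U_3)=-h_3$ on $\IGr(3,6)$, which are in turn forced by (\ref{eq:taut})). After that the mutation computations are formal, and in particular the two $\RR\Hom$-vanishings follow at once from the semiorthogonality of (\ref{SOD2}); so — unlike in Lemma \ref{orth} — neither the attaching triangle (\ref{att}) nor the exceptional collections of Lemma \ref{SG} are actually needed here (though one could of course re-derive these vanishings by the attaching-triangle method, pushing along $p_3$, where $\RR p_{3*}\shO_E(-h_2)$ and $\RR p_{3*}\shO_E(-2h_2)$ vanish because the fiberwise restrictions are $\shO_{\PP^2}(-1)$ and $\shO_{\PP^2}(-2)$).
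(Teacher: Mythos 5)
Your proof is correct, and it reaches the two identities by a genuinely different (though closely related) route from the paper's. The paper first computes $\bR Hom(\shO(h_2), U_2(h_3))$ and $\bR Hom(U_2(2h_2),\shO(h_3))$ explicitly --- restricting to $E$ via the attaching triangle (\ref{att}), resolving $N_\omega^{\vee}$ by (\ref{eq:1}), and using the exceptional collection on $\IGr(2,6)$ --- finds each to be $\CC[-1]$, identifies the generator with the extension class of the Euler sequence (\ref{euler}) (resp.\ its dual), and reads off the cone of the evaluation map as the middle term. You instead take the Euler sequence itself as the input, apply the mutation functor to the whole triangle, and evaluate it on the other two terms: the exceptional object is annihilated by its own mutation, and the middle term is left fixed because the relevant $\bR Hom$ vanishes by semiorthogonality of SOD (\ref{SOD2}). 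Your route buys two things: it dispenses with the $\bR Hom$ computations (so neither the attaching triangle nor Lemma \ref{SG} is needed for this lemma), and it makes explicit a step the paper leaves implicit, namely why the cone of the evaluation map is the middle term of the Euler sequence rather than some other object --- in the paper this is justified only because the $\Ext^1$ in question is one-dimensional and the sequence is non-split. The cost is the (routine) verification that $\shO(h_2)$ and $U_3^{\vee}(h_2)$ lie in the last block of (\ref{SOD2}) while $U_3(h_3)$ and $\shO(h_3)$ lie in the middle one, together with the exactness of mutation functors; both are standard, and your determination of the twist $U_3/U_2\cong\shO(h_2-h_3)$ via (\ref{pf}) agrees with (\ref{euler}) and with first Chern classes. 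In the end both arguments rest on the same geometric fact, the tautological flag sequence on $E=\IFl(2,3,6)=\PP_{\IGr(3,6)}(U_3^{\vee})$, so the difference is one of bookkeeping rather than substance.
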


 \begin{proof}
 \begin{enumerate}
 \item By the attaching distinguished triangle (\ref{att}),
  \begin{align*}
 & \bR Hom(\shO(h_2), U_2(h_3))=\bR Hom_E(\shO(h_2), U_2(h_3))=\bR Hom(U_2^{\vee}, N_{\omega}^{\vee})\\
 &=Cone(\bR Hom(U_2^{\vee}, Q_4[-1])\rightarrow \bR Hom(U_2^{\vee}, U_2^{\vee}[-1]))=\CC[-1].
 \end{align*}
 This corresponds to the Euler sequence on $E=\PP_{\IGr(3, 6)}(U_3^{\vee})$:
 \begin{equation}\label{euler}
\begin{tikzcd}
0\arrow[r] & U_2(h_3) \arrow[r] & U_3(h_3) \arrow[r] & \shO(h_2) \arrow[r] &0.
\end{tikzcd}
\end{equation}
Hence,
 \begin{align*}
& \LL_{\shO(h)} U_2(h_3)=Cone(\bR Hom(\shO(h_2), U_2(h_3))\otimes \shO(h_2)\xrightarrow{ev} U_2(h_3))\\
 &=Cone(\shO(h_2)\rightarrow U_2(h_3)[1])=U_3(h_3).
 \end{align*}
\item By the attaching distinguished triangle (\ref{att}),
\begin{align*}
&\bR Hom(U_2(2h_2), \shO(h_3))=\bR Hom_E(U_2(2h_2), \shO(h_3))=\bR Hom_{\IGr(2, 6)}(U_2(h_2), N_{\omega}^{\vee})\\
&=Cone(\bR Hom(U_2(h_2), U_2)\rightarrow \bR Hom(U_2(h_2), Q_4^{\vee}))=\CC[-1].
\end{align*}
Note that $U_2(2h_2)=U_2^{\vee}(h_2)$, $\CC[-1]$ corresponds to the dual sequence of (\ref{euler}).:
\begin{equation*}
\begin{tikzcd}
0\arrow[r] & \shO(h_3) \arrow[r] & U_3^{\vee}(h_2) \arrow[r] & U_2^{\vee}(h_2) \arrow[r] &0.
\end{tikzcd}
\end{equation*}
 Hence,
\begin{align*}
&\RR_{\shO(H)}U_2(2h_2)=Cone(U_2(2h_2)\xrightarrow{ev^{\vee}}\bR Hom(U_2(2h_2), \shO(h_3))^{\vee}\otimes \shO(h_3))\\
&=Cone(U_2(2h_2)\rightarrow \shO(h_3)[1])=U_3^{\vee}(h_2).
\end{align*}
\end{enumerate}
\end{proof}

\section{Proof of Theorem \ref{thm}}

\subsection{Mutations on SOD (\ref{SOD1})}\mbox{}

\begin{description}[align=left]
 \item [\textbf{Step 1}]
 We right mutate $\pi_2^*D(X_2)$ through $\langle \shA'(-2h_2), \shA'(-h_2)\rangle$ and right mutate\\ 
 $\langle \shA'(-2h_2), \shA'(-h_2)\rangle$ to the far right. Note that $K_{X}|_E=\shO(-h_3-2h_2)$, and we obtain
 % (Use the fact that $SG(k,V)\hookrightarrow G(k,V)$ is a regular section of $\wedge^2 \shU_k^*$.)
 \[
 D(X)=\langle \shD_2, \shA, \shA(h_2), \shA'(2h_2), \shA'(h_3), \shA'(h_2+h_3)\rangle,
 \]
 where $\shD_2=\RR_{\langle \shA'(-2h_2), \shA'(-h_2)\rangle}\pi_2^*D(X_2)$.

 \item [Step 2]
 By Lemma \ref{orth}, we can exchange $\shA'(2h_2)$ and $U_2(h_3)$:
 \[
 D(X)=\langle \shD_2, \shA, \shA(h_2), U_2(h_3), \shA'(2h_2), \shO(h_3), \shA'(h_2+h_3)\rangle.
 \]

 \item[Step 3]
 We left mutate $U_2(h_3)$ through $\shO(h_2)$ by Lemma \ref{mut}:
 \[
 D(X)=\langle \shD_2, \shA, S^2U_2(h_2), U_2(h_2),U_3(h_3),\shO(h_2), \shA'(2h_2), \shO(h_3).\shA'(h_2+h_3)\rangle.
\]

\item[Step 4]
Exchange $\shO(2h_2)$ and $\shO(h_3)$ by Lemma \ref{orth}:
\begin{align*}
 D(X)=\langle & \shD_2, \shA, S^2U_2(h_2), U_2(h_2),U_3(h_3),\shO(h_2), U_2(2h_2), \shO(h_3), \\&\shO(2h_2), \shA'(h_2+h_3)\rangle.
\end{align*}

\item[Step 5]
We right mutate $U_2(2h_2)$ through $\shO(h_3)$ and left mutate $U_2(h_2+h_3)$ through $\shO(2h_2)$ by Lemma \ref{mut}:
\begin{align}
 D(X)=\langle & \shD_2, \shA, S^2U_2(h_2), U_2(h_2),U_3(h_3),\shO(h_2), \shO(h_3), U_3^{\vee}(h_2), U_3(h_2+h_3), \label{sod1'}\\
 &\shO(2h_2), \shO(h_2+h_3)\rangle.  \nonumber
\end{align}
\end{description}

\subsection{Mutation on SOD (\ref{SOD2})}
\begin{sloppypar}
We right mutate $\pi_3^*D(X_3)$ through $\langle \shB(-2h_3), \shB(-h_3)\rangle$ and then right mutate $\langle \shB(-2h_3), \shB(-h_3)\rangle$ to the far right, and we obtain
\begin{align}
D(X)=\langle \shD_3, \shB, \shB(h_3), D(\IGr(3,6))(h_2),\shB(-h_3+2h_2),\shB(2h_2)\rangle, \label{sod2'}
\end{align}
where $\shD_3=\RR_{\langle \shB(-2h_3), \shB(-h_3)\rangle}\pi_3^*D(X_3)$.
\end{sloppypar}
\subsection{Comparing SOD (\ref{sod1'}) and (\ref{sod2'})}
To show that there is an embedding $D(X_3)\hookrightarrow D(X_2)$, it is sufficient to show that the composition of the following functors
\begin{center}
\begin{tikzcd}
\phi: \shD_3 \arrow[r, "i_{\shD_3}"] & D(X) \arrow [r, "\pi_{\shD_2}"]  & \shD_2
\end{tikzcd}
 \end{center}
is fully-faithful, where $i_{\shD_3}$ is the natural embedding and $\pi_{\shD_2}$ is left adjoint to the natural embedding $\shD_2 \hookrightarrow D(X)$. That is, for any $x,y \in \shD_3$,
$$ \Hom_{\shD_2}(\phi(x), \phi(y))=\Hom_{\shD_3}(x,y).$$
By adjunction,
\begin{align*}
\Hom(\phi(x), \phi(y))=\Hom(\pi_{\shD_2}i_{\shD_3} x, \pi_{\shD_2}i_{\shD_3}y)=\Hom(i_{\shD_3} x, i_{\shD_2} \pi_{\shD_2}i_{\shD_3}y).
\end{align*}

Therefore, it is sufficient to show that
\begin{equation*}
 \Cone(y\rightarrow \pi_{\shD_2}y)=0.
\end{equation*}
By noticing that $\pi_{\shD_2}=\LL_{{}^{\perp}\shD_2}$, it is sufficient to show that $\Hom(^{{}\perp}\shD_2,y)=0$ holds for any $y\in \shD_3$, which is equivalent to $^{{}\perp}\shD_2\subset\, ^{{}\perp}\shD_3. $

Recall that
\begin{align}
^{{}\perp}\shD_2=\langle & S^2 U_2, U_2, \shO, S^2U_2(h_2), U_2(h_2),U_3(h_3),\shO(h_2), \shO(h_3),\label{SODD2}\\
& U_3^{\vee}(h_2), U_3(h_2+h_3), \shO(2h_2), \shO(h_2+h_3)\rangle. \nonumber
\end{align}
We will show that each SOD component in (\ref{SODD2}) is an exceptional object of $^{{}\perp}\shD_3$. 
\begin{enumerate}
\item[(i)] It is easy to see that 
$$\langle \shO, U_3(h_3), \shO(h_2), \shO(h_3), U_3(h_2+h_3), \shO(2h_2), \shO(h_2+h_3)\rangle \subset\,^{{}\perp}\shD_3.$$
\item[(ii)] By the short exact sequence (\ref{euler}),  $U_2\in \langle U_3, \shO(h_2-h_3)\rangle$. Hence $$\langle U_2, U_2(h_2)\rangle \subset\,^{{}\perp}\shD_3.$$
\item[(iii)] The following Euler sequence on $\IGr(3, 6)$
\begin{equation}
\begin{tikzcd}
0\arrow[r] &U_3 \arrow[r] & \CC^6\otimes \shO \arrow[r] & U_3^{\vee}\arrow[r] &0. 
\end{tikzcd}
\end{equation}\label{euler3}
implies that
 $$U_3^{\vee}(h_2) \in \langle U_3(h_2), \shO(h_2)\rangle\subset\,^{{}\perp}\shD_3.$$
 \item[(iv)] By considering the second symmetric power of  (\ref{euler}):
\begin{equation*}
\begin{tikzcd}
0\arrow[r] & S^2 U_2 \arrow[r] & S^2 U_3 \arrow[r] & U_3(h_2-h_3) \arrow[r] &0
\end{tikzcd}
\end{equation*}
and the Koszul resolution on the Euler sequence (\ref{euler3}) on $\IGr(3, 6)$:
\begin{equation*}
\begin{tikzcd}
0\arrow[r] & S^2 U_3 \arrow[r] & U_3\otimes \CC^6 \arrow[r] & \wedge^2 \CC^6\otimes \shO \arrow[r]& U_3(h_3)\arrow[r] &0,
\end{tikzcd}
\end{equation*}
we can obtain $S^2U_2\subset\, ^{{}\perp}\shD_3$ and $S^2U_2(h_2)\subset\,^{{}\perp}\shD_3.$
 \end{enumerate}
% $S^2U_2\in \langle U_3, \shO, U_3(h_3), U_3(h_2-h_3) \rangle \subset\, ^{{}\perp}\shD_3$ and
%$S^2U_2(h_2)\in \langle U_3(h_2), \shO(h_2), U_3(h_3+h_2), U_3(2h_2-h_3)\rangle \subset\,^{{}\perp}\shD_3.$
In summary, $\phi$ is fully-faithful and the composition of the following functors gives an embedding:
\begin{equation*}
D(X_3) \xrightarrow{\RR_{\langle \shA'(-2h_2), \shA'(-h_2)\rangle}\circ \pi_3^*} \shD_3\xrightarrow{\phi} \shD_2 \xrightarrow{\pi_{2*}\circ \LL_{\langle \shB(-2h_3), \shB(-h_3)\rangle}}D(X_2), 
\end{equation*}
which completes the proof of Theorem \ref{thm}.

%\clearpage
\appendix
\setcounter{secnumdepth}{0}
\section*{Dynkin Diagrams}
We label Dynkin diagrams as follows: 
\begin{align*}
 & A_m\times A_n: \dynkin[edge length=1, root radius=.1cm,labels={\alpha_1,\alpha_2, \alpha_3, \alpha_{m-1},\alpha_m}]{A}{***.**} \quad  \dynkin[edge length=1, root radius=.1cm,labels={\beta_1,\beta_2,\beta_n}]{A}{**.*},\\
 & A_n: \dynkin[edge length=1, root radius=.1cm,labels={\alpha_1,\alpha_2, \alpha_3, \alpha_{n-2}, \alpha_{n-1},\alpha_n}]{A}{***.***},\\
 & B_n: \dynkin[edge length=1, root radius=.1cm,labels={\alpha_1,\alpha_2, \alpha_3, \alpha_{n-2}, \alpha_{n-1},\alpha_n}]{B}{***.***},\\
 & C_n: \dynkin[edge length=1, root radius=.1cm,labels={\alpha_1,\alpha_2, \alpha_3,\alpha_{n-2}, \alpha_{n-1},\alpha_n}]{C}{***.***},\\
 & D_n: \dynkin[edge length=1, root radius=.1cm,labels={\alpha_1,\alpha_2, \alpha_3, \alpha_{n-2},\alpha_{n-1},\alpha_n}]{D}{***.***},\\
 & E_6: \dynkin[edge length=1, root radius=.1cm,labels={\alpha_1,\alpha_2, \alpha_3, \alpha_4, \alpha_5, \alpha_6}]{E}{******},\\
 & E_7: \dynkin[edge length=1, root radius=.1cm,labels={\alpha_1,\alpha_2, \alpha_3, \alpha_4, \alpha_5, \alpha_6, \alpha_{7}}]{E}{*******},\\
& E_8: \dynkin[edge length=1, root radius=.1cm,labels={\alpha_1,\alpha_2, \alpha_3, \alpha_4, \alpha_5, \alpha_6, \alpha_{7}, \alpha_8}]{E}{********},\\
 &F_4: \dynkin[edge length=1, root radius=.1cm,labels={\alpha_1,\alpha_2, \alpha_3, \alpha_4}]{F}{****},\\
 & G_2: \dynkin[edge length=1, root radius=.1cm,labels={\alpha_1,\alpha_2}]{G}{**}.
 \end{align*}
 
 \section*{Acknowledgements}
%\begin{acknowledgements}
We thank Kowk Wai Chan, Jesse Huang, Lisa Li, Laurent Manivel, Yukinobu Toda,  Zhiwei Zheng, Yan Zhou and especially Yalong Cao, Qingyuan Jiang, Mikhail Kapranov, Yujiro Kawamata, Eduard Looijenga, Chin-Lung Wang for many helpful discussions and suggestions.  
We are grateful for numerous constructive comments from anonymous referees. N. C. L. is supported by grants from the Research Grants Council of the Hong Kong Special Administrative Region, China (Project No. CUHK14301117, CUHK14303518 and Direct grant CUHK4053337).
%\end{acknowledgements}

\end{document}